\definecolor{labelkey}{rgb}{0,0.08,0.45}
\definecolor{refkey}{rgb}{0,0.6,0.0}
\newcommand{\fenv}[1]%
{\ensuremath{\,\overrightarrow{\operatorname{env}}_{#1}}}
\newcommand{\benv}[1]%
{\ensuremath{\,\overleftarrow{\operatorname{env}}_{#1}}}
\newcommand{\RR}{\ensuremath{\mathbb R}}
\newcommand{\NN}{\ensuremath{\mathbb N}}
\newtheorem{theorem}{Theorem}[section]
\newtheorem{lemma}[theorem]{Lemma}
\newtheorem{corollary}[theorem]{Corollary}
\newtheorem{proposition}[theorem]{Proposition}
\newtheorem{definition}[theorem]{Definition}
\theoremstyle{plain}{\theorembodyfont{\rmfamily}
}
\theoremstyle{plain}{\theorembodyfont{\rmfamily}
}
\theoremstyle{plain}{\theorembodyfont{\rmfamily}
}
\theoremstyle{plain}{\theorembodyfont{\rmfamily}
\newtheorem{example}[theorem]{Example}}
\newtheorem{fact}[theorem]{Fact}
\theoremstyle{plain}{\theorembodyfont{\rmfamily}
\newtheorem{remark}[theorem]{Remark}}
\def\doi{DOI}
\newcounter{count}
\begin{document}

\title{\textrm{An inexact strategy for the projected gradient algorithm in vector optimization problems on variable ordered spaces}}
\author{
J.Y.\ Bello-Cruz\thanks{Corresponding author. Department of Mathematical Sciences, Northern Illinois University. Watson Hall 366, DeKalb, IL, USA - 60115. E-mail:
\texttt{yunierbello@niu.edu.}}~~~~~~~~G. Bouza Allende\thanks{Facultad de Matem\'atica y Computaci\'on, Universidad de La Habana, La Habana,   Cuba - 10400.  E-mail:
\texttt{gema@matcom.uh.cu} }}

\date{August, 2019}

\maketitle \thispagestyle{fancy}

\begin{abstract} \noindent 
Variable order structures model situations in which the comparison between two points depends on a point-to-cone map. In this paper, an inexact projected gradient method for solving smooth constrained vector optimization problems on variable ordered spaces is presented. It is shown that every accumulation point of the generated sequence satisfies the first order necessary optimality condition. The convergence of all accumulation points to a weakly efficient point is established under suitable convexity assumptions for the objective function. The convergence results are also derived in the particular case in which the problem is unconstrained and if exact directions are taken as descent directions. Furthermore, we investigate the application of the proposed method to optimization models where the domain of the variable order map and the objective function are the same. In this case, similar concepts and convergence results are presented. Finally, some computational experiments designed to illustrate the behavior of the proposed inexact methods versus the exact ones (in terms of CPU time) are performed. \end{abstract}
{\small \noindent {\bfseries 2010 Mathematics Subject
Classification:} {90C29,  90C52, 65K05, 35E10
}
}

\noindent {\bfseries Keywords:} Gradient method -- $K$--convexity -- Variable order --  Vector optimization --  Weakly
	efficient points 


\section{Introduction}
Variable order structures are a natural extension of the well-known fixed (partial) order given by a closed, pointed and convex cone; see  \cite{gabrielle-book}. These kind of orderings model situations in which the comparison between two points depends on a set-valued map. 
These problems have recently received much attention from the optimization community due to their broad application to several different areas. 
{Variable order structures (VOS), given by a point-to-cone valued map, were well studied in \cite{gabrielle-book, gabi2, ap13}, motivated by important applications.  VOS appear in medical diagnosis \cite{Eichfelder-med},
portfolio optimization \cite{ap31}, capability theory of well-being \cite{bao-mord-soub2}, psychological modeling \cite{bao-mord-soub}, consumer preferences \cite{John1, John2} and location theory, etc; see, for instance, \cite{ap2, ap13}.} The main goal of these models is to find an element of a certain set such that the evaluation of the objective function cannot be improved by the image of any other feasible point with respect to the variable order. So, their mathematical description corresponds to the so-called Optimization Problem(s) on Variable Ordered Spaces (OPVOS(s)).   {For the reasons mentioned above, although the variable order setting is a relatively new avenue of research, several papers and even books have been published with many real-life problems modeled via this approach. For example, an interesting application of vector optimization with a variable structure is given in the theory of consumer demand in economics by John \cite{John1,John2}. These papers present a local and a global theory in order to explain consumer behaviors. In the local approach it is assumed that the consumer faces a nonempty set of feasible alternatives, $A \subset \mathbb{R}^n$. By contrast with the global approach, a local preference only requires that the consumer is able to rank alternatives in a small neighborhood of a given commodity bundle relative to that bundle. This idea can be represented by an economical comparative function $g \colon \mathbb{R}^n\to \mathbb{R}^n$ such that 
$\overline{y}$ in a neighborhood of $y$ is interpreted to be better than $y$ if and only if $g(\overline{y})^T(y-\overline{y}) < 0$. The choice set assigned to $A$ in the local theory is then given by 
$$C(A) := \left\{\overline{y} \in A : \forall y \in A,\; g(\overline{y})^T(y -\overline{y}) \geq 0\right\}.$$
This leads to a set-valued map $K \colon \mathbb{R}^n \rightrightarrows  \mathbb{R}^n$  defined by 
$$K(\overline{y}) := \left\{d \in \mathbb{R}^n : g(\overline{y})^Td \geq 0\right\}.$$
If the consumer is interested in an alternative $\overline{y}\in A$ such that
$\forall y \in A$, $g(\overline{y})^T(y-\overline{y}) \geq 0$,
then $y -\overline{y}\in K(\overline{y})$ for all $y \in A$, i.e., $ A\subseteq \{\overline{y}\} + K(\overline{y}).$
Furthermore, if the consumer is looking for alternatives $\bar y\in A$ such that
$$\forall y \in A \setminus \{\overline{y}\},\; g(\overline{y})^T(y -\overline{y}) < 0,$$
then, for all $y \in A \setminus \{\overline{y}\}$,
$y -\overline{y} \notin K(\overline{y}).$
This means that the consumer is looking for alternatives $\bar y \in A$ such that
$A \setminus (\overline{y}-  K(\overline{y})) = \{\overline{y}\},$ 
i.e., the consumer is looking for minimal points of the  vector optimization
problem with variable domination structure $\min_K y \; s.t. \;  y\in A.$ }

OPVOSs have been studied in \cite{ap11}, in the sense of finding a minimizer of the image of a vector function, with respect to a variable ordered structure depending on points
in the image.  It is a particular case of the problem described in \cite{gabi2}, where the goal of the model is to find a minimum of a set.  
Here we will consider a partial (variable) order defined by the cone-valued map which is used to define our problem.  We want to point out that  OPVOSs  generalize the classical vector optimization problems. Indeed, they correspond to the case in which the order is defined by a constant cone valued map.  Many approaches have been proposed to solve the classical constrained vector optimization, such as projected gradient methods, proximal points iterations, weighting technique schemes, Newton-like and subgradient methods; see, for instance, \cite{yunier-2013,yunier-luis-2014,BIS,grana-maculan-svaiter,fliege-grana-svaiter,jahn0, [21], luis-jef-yun, [20], luc1,fliege-svaiter,grana-svaiter}.  It is worth noting that, as far as we know, only a few of these schemes mentioned above have been proposed and studied in the variable ordering setting; as, e.g., the steepest descent algorithm and sub-gradient-like algorithm for unconstrained problems, and  {a Newton-like method}; see, for instance,  \cite{bento-gema-2018,nous, luis-gema-yunier-2014}. The use of extensions of these iterative algorithms to the variable ordering setting is currently a promising idea. So, it is important to find efficient solution algorithms for solving these kinds of models.

In this work,
we present the projected gradient method with an inexact strategy for solving constrained variable order vector problems because of its simplicity and the adaptability to the vector structure of the  problem.
Moreover, we derive the convergence of the exact projected gradient method and the inexact projected gradient method for the unconstrained problem. Finally, analogous results are obtained if the variable order is given by a point-to-cone map whose domain coincides with the image of the objective function.

This work is organized as follows. The next section provides some notations and preliminary results that will be used in the remainder of this paper. We also recall the concept of $K$--convexity of a function on a  variable ordered space and present
some properties of this class. Section $3$ is devoted to the presentation of {the inexact projected gradient algorithm}.
The convergence of the sequence generated by the projected gradient method is shown in Section $4$. Then, under the $K$--convexity of the objective function and the convexity of the set of feasible solutions, we guarantee that the generated sequence is bounded and all its accumulation points are
solutions of the variable order problem. Section $5$ discusses the properties of this algorithm when the variable order is taken as a cone-value set from the image of the objective function. Section $6$ introduces some examples illustrating the behavior of both proposed methods. Finally, some final remarks are given.

\section{Preliminaries}
In this section {we present some preliminary results and definitions.}
First we introduce some useful notations. Throughout
this paper, $p:=q$ indicates that $p$ is defined to be
equal to $q$ and we
write $\NN$ for the nonnegative integers $\{0, 1, 2,\ldots\}$. The inner product in $\RR^n$ will be denoted by $\langle\cdot, \cdot \rangle$ and the induced norm by $\|\cdot\|$.  The closed ball centered at $x$ with radius $r>0$ is represented by
$\mathbb{B}(x,r):=\{y\in\RR^n:{\mbox{dist}(x,y):=\|y-x\|}\leq r\}$ and also the sphere by  $\mathbb{S}(x,r):=\{y\in\mathbb{B}(x,r):\mbox{dist}(x,y)= r\}$.  Given two bounded sets $A$ and $B$, we will consider
$\mbox{d}_H(A,B)$ as the {\em Hausdorff} distance, \emph{i.e.} $$\mbox{d}_H(A,B) := \max\left\{\,\sup_{a \in A} \inf_{b \in B} \mbox{dist}(a,b),\, \sup_{b \in B}
\inf_{a \in A} \mbox{dist}(a,b)\,\right\},$$ or equivalently
$
\mbox{d}_H(A,B)=\inf\{\epsilon\ge0 : A\subseteq B_\epsilon\quad  \mbox{and}\quad B\subseteq A_\epsilon \},
$ where $$D_\epsilon:=\displaystyle\cup_{ d\in D}\{x\in\RR^n : \mbox{dist}(d,x)\le \epsilon\}$$ is the $\epsilon$--enlargement of any set $D$. 
The set $D^c$ and ${\rm int}(D)$ denote the complement and the interior of of $D$, respectively. {{${\rm conv}(D)$ is used for the convex hull of $D$, i.e., the intersection of all convex sets containing $D$. If $D$ is closed and convex, we define the orthogonal projection of $x$ onto $D$, denoted by $P_D(x)$, as the unique point in $D$ such that $\|P_D(x)-y\| \le \|x-y\|$ for all $y \in D$}. Given the partial order structure induced by a cone $\mathcal{K}$,
the concept of infimum of a sequence can be defined. Indeed, for a
sequence $(x^k)_{k\in\NN}$ and a cone $\mathcal{K}$, the point
$x^*$ is $\inf_k \{x^k\}$ iff $(x^k-x^*)_{k\in\NN}\subset
\mathcal{K}$, and there is not $x$ such that {$x-x^*\in \mathcal{K}$, $x\neq x^*$}
and $(x^k- x)_{k\in\NN}\subset \mathcal{K}$. We said that $\mathcal{K}$ has the {\em Daniell} property if for all sequence $(x^k)_{k\in\NN}$ such that $(x^k-x^{k+1})_{k\in\NN}\subset \mathcal{K}$ and for some $\hat{x}$, $(x^k-\hat x)_{k\in\NN}\subset \mathcal{K}$,
then $\lim_{k\rightarrow\infty}x^k=\inf_k \{x^k\}$.
Here we assume that $K(x)$, $x\in\RR^n$, is a convex, pointed, and closed cone, which guarantees that $K(x)$ has  the Daniell property as was shown in \cite{theluc}.
For each $x\in \mathbb{R}^n$,  the dual cone of $K(x)$ is defined as
$K^*(x):=\{w\in \mathbb{R}^m: \langle w,y\rangle\geq 0, \text{ for all }y\in K(x)\}$.
As usual, the graph of a {set-valued map $K\colon \RR^n\rightrightarrows\RR^m$} is the set $Gr(K):=\{(x,y)\in \mathbb{R}^n\times \mathbb{R}^m:\; y\in K(x)\}.$ Finally, we remind that the mapping $K$ is
closed  if
$Gr(K)$ is a closed subset of $\mathbb{R}^n\times \mathbb{R}^m$.

Next,  we will define the constrained  vector optimization problem on variable ordered spaces, which finds  a $K$--minimizer of the vector function $F\colon\RR^n\to \RR^m$ in the set $C$ as
\begin{equation}\label{(P)}K-\min F(x),\quad x\in C.\end{equation}
Here $C$ is a nonempty convex and closed subset of  $\RR^n$ and $K\colon\mathbb{R}^n\rightrightarrows \mathbb{R}^m$ is a point-to-cone map, where for each $x\in \mathbb{R}^n${, 
$K(x)$} is a {pointed, convex and closed cone with nonempty interior}. We say that the  point $x^*\in C$ is a minimizer of problem \eqref{(P)} if for all $x\in C$,  $$F(x)-F(x^*)\notin {-}K(x^*)\setminus\{0\}.$$
The set of all minimizers {(or efficient solutions)} of problem \eqref{(P)} is denoted by  $S^*$.

As in the case of classical vector optimization, related solution concepts  such as weakly efficient and stationary  points can be extended to the constrained setting.
The point $x^*\in C$ is a weak solution of problem \eqref{(P)} {iff} for all $x\in C$, $F(x)-F(x^*)\notin -{\rm int}(K(x^*))$, $S^w$ is the set of all weak solution points.
We want to point out that this definition corresponds with the concept of weak minimizer given in \cite{gabi2}. On the other hand, if $F$ is a continuously  differentiable function, the point  {$x^*\in C$} is stationary, iff  for all $d\in  C-x^*:=\{d\in\mathbb{R}^n:\; d=c-x^*,\text{ for some }c\in C\}$, we have
\begin{equation}\label{stationary-inclusion} J_F(x^*)d\notin -{\rm int}(K(x^*)),\end{equation} where $J_F$ denotes the Jacobian {matrix} of $F$.
The set of all stationary points will be denoted by $S^{s}$.

Now we present a version of Proposition~2.1 of \cite{nous}{, which is an extension of Lemma $5.2$ of \cite{[20]} for constrained OPVOS.} 
\begin{proposition}\label{primal}
 Let  {$x^*\in C$} be a weak solution of problem \eqref{(P)}.  If $F$ is a continuously differentiable function, then $x^*$ is a stationary point.
\end{proposition}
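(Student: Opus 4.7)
The plan is to argue by contradiction in the standard way, reducing the claim to a first-order Taylor expansion and exploiting the openness of $-\operatorname{int}(K(x^*))$ together with the cone structure.

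First I would assume that $x^*$ is not stationary. By \eqref{stationary-inclusion}, this means there exists a direction $d\in C-x^*$, say $d=c-x^*$ with $c\in C$, such that
\begin{equation*}
J_F(x^*)\,d\in -\operatorname{int}(K(x^*)).
\end{equation*}
Because $C$ is convex and $x^*,c\in C$, the segment $x_t:=x^*+t\,d=(1-t)x^*+tc$ lies in $C$ for every $t\in[0,1]$. This is where the feasibility of the test point is secured; I would make sure to emphasize it, since it is the only place the convexity of $C$ intervenes.

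Next I would expand $F$ at $x^*$ using continuous differentiability:
\begin{equation*}
F(x_t)-F(x^*)=t\,J_F(x^*)\,d+o(t),\qquad t\to 0^+.
\end{equation*}
Dividing by $t>0$ gives $\tfrac{1}{t}\bigl(F(x_t)-F(x^*)\bigr)\to J_F(x^*)\,d$ as $t\downarrow 0$. Since $-\operatorname{int}(K(x^*))$ is an open set and contains $J_F(x^*)\,d$, there exists $\bar t\in(0,1]$ such that for all $t\in(0,\bar t]$,
\begin{equation*}
\tfrac{1}{t}\bigl(F(x_t)-F(x^*)\bigr)\in -\operatorname{int}(K(x^*)).
\end{equation*}
Now I would invoke the cone property of $K(x^*)$: since $K(x^*)$ is a convex cone with nonempty interior, $\operatorname{int}(K(x^*))$ is invariant under multiplication by positive scalars. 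Hence multiplying the previous inclusion by $t>0$ yields $F(x_t)-F(x^*)\in -\operatorname{int}(K(x^*))$ for all sufficiently small $t>0$.

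This contradicts the hypothesis that $x^*$ is a weak solution, because $x_t\in C$ and by definition $F(x_t)-F(x^*)\notin -\operatorname{int}(K(x^*))$. Therefore $x^*$ must be stationary. The only delicate step is the stability of the inclusion under the $o(t)$ perturbation, which is handled by the openness of $\operatorname{int}(K(x^*))$; the positive homogeneity of the interior of a convex cone then lets me rescale back to $F(x_t)-F(x^*)$ itself. No additional regularity of the cone-valued map $K$ is needed at $x^*$, since the entire argument takes place inside the single cone $K(x^*)$.
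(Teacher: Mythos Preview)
Your proof is correct and follows essentially the same route as the paper's: Taylor expansion along the feasible segment $x^*+t d$, division by the scalar parameter, and a limiting argument exploiting the cone structure of $K(x^*)$. The only cosmetic difference is that you argue by contradiction using the openness and positive homogeneity of $-\operatorname{int}(K(x^*))$, whereas the paper proceeds directly by showing that $J_F(x^*)d+o(\alpha)/\alpha$ stays in the closed cone $(-\operatorname{int}(K(x^*)))^c$ and then passes to the limit; these are dual formulations of the same mechanism.
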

\begin{proof}  Suppose that   {$x^*\in C$} is a weak solution of problem \eqref{(P)}. Fix  $d\in C-x^*$. By definition there exists $c\in C$, such that $d=c-x^*$. 
Since $C$ is a convex set,  for all $\alpha\in[0,1]$, $x^*+\alpha d\in C.$   Since  {$x^*\in C$} is a weak solution of problem \eqref{(P)},  $F(x^*+\alpha d)-F(x^*)\notin -{\rm int}(K(x^*))$. Hence,
\begin{equation}\label{**}F(x^*+\alpha d)-F(x^*)\in (-{\rm int}(K(x^*)))^c.
 \end{equation}
The Taylor expansion of $F$ at $x^*$ leads us to  $F(x^*+\alpha d)=F(x^*)+\alpha  J_F(x^*)d +o(\alpha).$  The last equation together with \eqref{**} implies
$\alpha  J_F(x^*)d +o(\alpha)\in (-{\rm int}(K(x^*)))^c.$
Using that $(-{\rm int}(K(x^*)))^c$ is a closed cone, and since $\alpha>  0$, it follows that   $$  J_F(x^*)d +\frac{o(\alpha)}{\alpha}\in (-{\rm int}(K(x^*)))^c.$$
Taking limit in the above inclusion, when $\alpha$ goes to $0$, and using the closedness of $(-{\rm int}(K(x^*)))^c$, we obtain that $  J_F(x^*)d\in (-{\rm int}(K(x^*)))^c,$
establishing that $x^*\in S^s$.\end{proof}

In classical optimization, {stationarity is also a sufficient condition for weak} minimality {under convexity assumptions}. For  vector optimization problems on variable ordered spaces, the convexity concept was introduced in Definition $3.1$ of \cite{nous} as follows:
\begin{definition}
 We say that  $F$ is a $K$--convex function {on} $C$ if for all $\lambda\in [0,1]$, $x,\bar{x}\in C$,
 $$F(\lambda x+(1-\lambda)\bar{x})\in \lambda F(x)+(1-\lambda)F(\bar{x})- K(\lambda x+(1-\lambda)\bar{x}).$$
\end{definition}
It is worth noting that in the variable order setting the convexity of {${\rm epi}{(F)}:=\{(x,y)\in\RR^n\times\RR^m \,|\, F(x)\in y - K(x)\}$} is equivalent to the $K$--convexity of $F$ iff $K(x)\equiv K$ for all $x\in \RR^n$; see Proposition $3.1$ of \cite{nous}.
As already shown in \cite{nous}, $K$--convex functions have directional derivatives under natural assumptions; see Proposition $3.5$ of \cite{nous}.  In particular, if $Gr(K)$ is closed and $F\in \mathcal{C}^1$ is $K$--convex, then we have the gradient inclusion inequality as follows: $$F(x)-F(\bar{x})\in  J_F(\bar{x})(x-\bar{x})+K(\bar{x}), \quad  x, \bar{x}\in C.$$ 
In the next proposition, we study the relation between stationarity, descent directions and weak solution concept in the constrained sense for problem \eqref{(P)} {extending to the variable order setting the results presented in Proposition $1$ of \cite{[21]} and Lemma 5.2 of \cite{[20]}.}

\begin{proposition}\label{note1}
 Let $K$ be a point-to-cone and closed mapping, and $F\in \mathcal{C}^1$ be a $K$--convex function. Then:
 \item[ {\bf(i)}] The point  {$x^*\in C$} is a weak solution of problem \eqref{(P)} iff it is a stationary point.
 \item[ {\bf(ii)}] If for all $d\in C-x^*$, $ J_F(x^*)d\notin  -K(x^*)\setminus\{0\}$, then $x^*$ is a minimizer of problem \eqref{(P)}.
 \end{proposition}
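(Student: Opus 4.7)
The plan is to handle the two parts separately, using the gradient inclusion inequality displayed in the paragraph before the proposition, namely
$F(x)-F(x^*)\in J_F(x^*)(x-x^*)+K(x^*)$
for all $x\in C$ (valid because $F$ is $\mathcal{C}^1$, $K$-convex, and $\mathrm{Gr}(K)$ is closed), together with the fact that $K(x^*)$ is a pointed, convex, closed cone with nonempty interior.

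For part (i), the direction ``weak solution $\Rightarrow$ stationary'' is already the content of Proposition~\ref{primal}, so it suffices to prove the converse. I would argue by contradiction: assume $x^*$ is stationary but not a weak solution, i.e., there exists $x\in C$ with $F(x)-F(x^*)\in -\mathrm{int}(K(x^*))$. By the gradient inclusion inequality, there is some $k\in K(x^*)$ with $J_F(x^*)(x-x^*)+k=F(x)-F(x^*)\in -\mathrm{int}(K(x^*))$, so $J_F(x^*)(x-x^*)\in -\mathrm{int}(K(x^*))-k$. Since $K(x^*)$ is a convex cone and $-\mathrm{int}(K(x^*))$ is an open convex cone, one has $-\mathrm{int}(K(x^*))-K(x^*)\subseteq -\mathrm{int}(K(x^*))$, so $J_F(x^*)(x-x^*)\in -\mathrm{int}(K(x^*))$ with $x-x^*\in C-x^*$, contradicting \eqref{stationary-inclusion}.

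For part (ii), I would again argue by contradiction. Suppose $x^*$ is not a minimizer; then there exists $x\in C$ with $F(x)-F(x^*)\in -K(x^*)\setminus\{0\}$. By the gradient inclusion inequality, there exists $k\in K(x^*)$ with $J_F(x^*)(x-x^*)=F(x)-F(x^*)-k\in -K(x^*)-K(x^*)=-K(x^*)$. Now I split into two cases. If $J_F(x^*)(x-x^*)\ne 0$, then $J_F(x^*)(x-x^*)\in -K(x^*)\setminus\{0\}$, which directly violates the hypothesis applied to $d=x-x^*\in C-x^*$. If instead $J_F(x^*)(x-x^*)=0$, then the identity above becomes $F(x)-F(x^*)=k\in K(x^*)$, but simultaneously $F(x)-F(x^*)\in -K(x^*)\setminus\{0\}$, so $k\in K(x^*)\cap(-K(x^*))$ with $k\ne 0$, contradicting the pointedness of $K(x^*)$.

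The only subtle point, and the one I would be careful about, is the cone arithmetic $-\mathrm{int}(K(x^*))-K(x^*)\subseteq -\mathrm{int}(K(x^*))$ used in (i); this follows from the fact that the sum of an open set and an arbitrary set is open together with convexity of $K(x^*)$, so $\mathrm{int}(K(x^*))+K(x^*)$ is an open convex cone contained in $K(x^*)$ whose closure is $K(x^*)$, forcing it to coincide with $\mathrm{int}(K(x^*))$. Everything else reduces to straightforward set manipulations combined with the gradient inequality and pointedness.
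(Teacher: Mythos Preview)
Your proof is correct and follows essentially the same approach as the paper: for (i) you combine Proposition~\ref{primal} with a contradiction argument using the gradient inclusion and the fact that $\mathrm{int}(K(x^*))+K(x^*)\subseteq \mathrm{int}(K(x^*))$, exactly as the paper does (the paper names the elements $k_1,k_2$ rather than using set arithmetic, but the content is identical). For (ii) your case split is just an explicit unpacking of the paper's argument, which directly infers $k_1+k_2=0$ from the hypothesis and then invokes pointedness.
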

\begin{proof} {\bf (i):} Let $x^*\in S^s$, where $S^s$ is the set of the stationary points.  If  {$x^*\in C$} is not a weak minimizer then there exists $x\in C$ such that
$- k_1:=F(x)-F(x^*)\in -{\rm int}(K(x^*))$.
By the convexity  of $F$, for some $k_2\in K(x^*)$, we have 
\begin{equation*} -k_1=F(x)-F(x^*)=  J_F(x^*)(x-x^*)+k_2.\end{equation*} It follows from {the above equality} that  
\begin{equation}
\label{stationarity}
 J_F(x^*)(x-x^*)=-(k_1+k_2).\end{equation}
Moreover, since  $ K(x^*)$ is a convex cone,  $k_1\in {\rm int}(K(x^*))$ and $k_2\in K(x^*)$, it holds that $k_1+k_2\in {\rm int}(K(x^*))$. 
Thus, {the last two equalities} imply that $  J_F(x^*)(x-x^*)\in - {\rm int}(K(x^*))$, which contradicts the fact that $x^*$ is a stationary point because $x$ belongs to $C$ and hence $x-x^*\in C-x^*$.
The conversely implication was already shown in Proposition \ref{primal}.
\medskip

\noindent {\bf (ii):} By contradiction suppose that there exists $x\in C$ such that $F(x)-F(x^*)=-k_1,$ where $k_1\in K(x^*)\setminus \{0\}.$
Combining the previous condition with \eqref{stationarity}, it follows that  $$  J_F(x^*)(x-x^*)=-(k_1+k_2)\in -K(x^*).$$
Using that $ J_F(x^*)(x-x^*)\notin -K(x^*)\setminus \{0\}$, we get that $(k_1+k_2)= 0$, and as $k_1,k_2\in K(x^*)$, $k_1=-k_2$. It follows from the pointedness of the cone $K(x^*)$ that $k_1=k_2=0$, contradicting the fact that $k_1\neq 0$.
\end{proof}

It is worth mentioning that the concept of $K$--convexity for $F$ depends of the point-to-cone mapping $K$. Thus, this general approach covers several convexity concepts, from the scalar setting to the vector one and it can be used to model a large number of applications; see, for instance, \cite{bao-mord-soub, bao-mord-soub2, Eichfelder-med}.  In Section $5$ we discuss another variable order when the point-to-cone map depends of the image set of $F$, such kind of variable orders was introduced and studied in \cite{luis-gema-yunier-2014,nous}.

The Inexact Projected Gradient Method to solve problem \eqref{(P)} is presented in the next section.

\section{The Inexact   Projected Gradient Method}

This section is devoted to present an inexact projected gradient method for solving constrained smooth problems equipped with a variable order. This method uses an Armijo-type line-search, which is done on inexact descent feasible directions. The proposed scheme here has two main differences with respect to the approach introduced in \cite{nous}. {(i) it solves constrained problems.  (ii) it accepts approximate  {directions} with some tolerance.}

In the following, several constrained concepts and results will be presented and proved, which will be used in the convergence analysis of the proposed method below.

We start this section by presenting some definitions and basic properties of some auxiliary functions and sets, which will be useful in the convergence analysis of the proposed algorithms.  
Firstly, we define the set valued mapping $G\colon\mathbb{R}^n\rightrightarrows \mathbb{R}^m$, which for each $x$, defines the set of the normalized generators of
$K^*(x)$, \emph{i.e.} $G(x)\subseteq K^*(x)\cap\mathbb{S}(0,1)$ is a compact set such that the cone generated by its convex hull is $K^*(x)$.
Although the set of the dual cone $K^*(x)\cap \mathbb{S}(0,1)$ fulfills those properties,  in general, it is possible to take smaller sets; see, for instance, \cite{jahn, jahn1, luc}. On the other hand, assuming that $F\in \mathcal{C}^1$, we consider {the support} function {$\rho\colon\mathbb{R}^n\times\mathbb{R}^m\to \mathbb{R}$ as
\begin{equation}\label{Drho}\rho(x,w):=\max_{y\in G(x)} y^T w.\end{equation} 
$\rho(x,w)$ was extensively studied for the vector optimization in  {Proposition} 3.1 of \cite{[20]} and it is useful to define the useful auxiliary function
$\phi\colon\mathbb{R}^n\times\mathbb{R}^n\to \mathbb{R}$, as
\begin{equation}\label{asconsec}\phi(x,v):=\max_{y\in G(x)} y^T J_F(x)v.\end{equation} 
Then, we are ready to introduce the following auxiliary subproblem,
for each $x\in \mathbb{R}^n$ and $\beta>0$, as
\begin{equation}\tag{$P_x$}\label{ee}
 \min_{v\in  C-x}\left\{ \frac{\| v\|^2}{2}+\beta \phi(x,v)\right\}.
\end{equation}}
\begin{remark}\label{newone}
Since $G(x)$ is compact, the function $\phi(x,\cdot)\colon\mathbb{R}^n\to \mathbb{R}$ is well defined for each $x\in \RR^n$. Moreover, it is a continuous function.
\end{remark}
Next proposition provides a characterization of the stationarity using the auxiliary function $\phi$, defined in \eqref{asconsec}. The unconstrained version of the following proposition can be found in Proposition 4.1 of \cite{nous}. {A version of this proposition was presented in Lemma 2.4 of \cite{[19]} for vector optimization}. 

\begin{proposition}\label{prop1}The following statements hold:
 \item[ {\bf(i)}] For each $x\in \mathbb{R}^n$, $\max_{y\in G(x)} y^T\hat{w}<0$ if and only if $\hat{w}\in -{\rm int}( K(x))$.
 \item[ {\bf(ii)}]The point $x$ is not   stationary   iff there exists $v\in C-x$ such that $\phi(x,v)<0$.
 \item[ {\bf(iii)}]If $\phi(x,v)<0$ and {$\beta>0$}, then   there exists $\overline{\lambda}>0 $ such that  $\displaystyle \frac{\|\lambda v\|^2}{2}+\beta \phi(x,\lambda v)<0 $ for all $\lambda\in (0, \overline{\lambda} ]$.
 \item[ {\bf(iv)}] For each $x\in \mathbb{R}^n$,  subproblem \eqref{ee} has a unique solution, denoted by $v(x)$.
\end{proposition}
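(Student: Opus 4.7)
My plan is to treat the four items in order, since each subsequent item builds on the previous one, with (iv) being the most substantial (though still short).

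For (i), I would use standard cone duality. The set $G(x)$ is defined so that its conic hull (of its convex hull) equals $K^{*}(x)$. By the bipolar theorem applied to the closed convex pointed cone $K(x)$ with nonempty interior, $\hat{w}\in -\operatorname{int}(K(x))$ is equivalent to the strict inequality $y^{T}\hat{w}<0$ for every nonzero $y\in K^{*}(x)$. Scaling reduces this to checking strict negativity on $G(x)$, and compactness of $G(x)$ converts the universal statement into the attainment $\max_{y\in G(x)} y^{T}\hat w <0$. Each implication is routine; the one mild subtlety is to invoke compactness of $G(x)$ to replace ``$y^{T}\hat w<0$ for all $y$'' by a strict max being negative.

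Item (ii) is then immediate from (i) applied to $\hat w = J_{F}(x)d$: negating the stationarity condition \eqref{stationary-inclusion} produces some $d\in C-x$ with $J_{F}(x)d\in -\operatorname{int}(K(x))$, which by (i) says $\phi(x,d)<0$, and the converse runs identically.

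For (iii) I would exploit positive homogeneity of $\phi(x,\cdot)$ in the second slot: since $G(x)$ is fixed and $\lambda>0$, one has $\phi(x,\lambda v)=\lambda\phi(x,v)$. Therefore the target quantity rewrites as $\lambda\bigl(\tfrac{\lambda\|v\|^{2}}{2}+\beta\phi(x,v)\bigr)$, so the inequality holds whenever $\lambda<\overline\lambda := -2\beta\phi(x,v)/\|v\|^{2}$, which is positive because $\phi(x,v)<0$ forces $v\ne 0$.

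Item (iv) is the main substance. I would argue existence and uniqueness by showing the objective $\psi_{x}(v):=\tfrac{1}{2}\|v\|^{2}+\beta\phi(x,v)$ is continuous, strictly convex, and coercive on the nonempty closed convex set $C-x$. Strict convexity comes from $\tfrac{1}{2}\|\cdot\|^{2}$ being strictly convex and $\phi(x,\cdot)$ being convex, since it is a pointwise maximum of linear functions $v\mapsto y^{T}J_{F}(x)v$. Coercivity comes from the bound $|\phi(x,v)|\le \|J_{F}(x)\|\,\|v\|$ (using $\|y\|=1$ on $G(x)\subseteq \mathbb S(0,1)$), so $\psi_{x}(v)\ge \tfrac{1}{2}\|v\|^{2}-\beta\|J_{F}(x)\|\,\|v\|\to +\infty$ as $\|v\|\to\infty$. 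Together with continuity of $\phi(x,\cdot)$ noted in Remark \ref{newone}, a minimizer exists over $C-x$, and strict convexity makes it unique. The only step to be careful about is that $C-x$ is nonempty and closed convex --- this follows because $C$ is nonempty closed convex by assumption; I would record this explicitly before invoking the Weierstrass argument. No single part is an obstacle; the whole proof is a standard chain, and the cleanest presentation is to cite (i) inside (ii) and to do (iv) via strict convexity plus coercivity rather than computing the KKT conditions.
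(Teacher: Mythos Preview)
Your proposal is correct and follows essentially the same route as the paper: (i) via cone duality (the paper simply cites \cite{nous}), (ii) by specializing (i) to $\hat w=J_F(x)v$, (iii) via positive homogeneity of $\phi(x,\cdot)$, and (iv) via convexity of $\phi(x,\cdot)$ combined with the quadratic term---the paper says ``strongly convex'' where you say ``strictly convex plus coercive,'' which is the same mechanism. One small slip: in (iii) your explicit choice $\overline{\lambda}=-2\beta\phi(x,v)/\|v\|^{2}$ yields equality, not strict inequality, at $\lambda=\overline{\lambda}$, so take any positive value strictly below it (the paper avoids this by not writing $\overline{\lambda}$ explicitly).
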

\begin{proof}
{\bf (i):} The result of this item follows as in Proposition 4.1(i) of \cite{nous}. 

\noindent {\bf (ii):} Note that, fixing $x$, it follows from \eqref{asconsec} that $\phi(x,v)= \rho(x, J_F(x)v)$. Then,  by the definition of stationarity  and item (i), the statement holds true.

\noindent {\bf (iii):}  It follows from the definition of $\phi(x,v)$ that $\phi(x,\cdot)$ is a positive homogeneous  function. Thus, for all $\lambda>0$,  
\begin{equation}
\label{abEq}
\frac{\|\lambda v\|^2}{2}+\beta \phi(x,\lambda v)=\lambda\left(\lambda  \frac{\|  v\|^2}{2}+\beta  \phi(x,  v)\right).\end{equation}
Since   $\phi(x,  v)<0$, there exists $\bar \lambda>0$ small enough such that $\bar \lambda  \displaystyle\frac{\|  v\|^2}{2}+\beta  \phi(x,  v)<0.$ Hence, \eqref{abEq} together with the above inequality implies that $\displaystyle\frac{\|\lambda v\|^2}{2}+\beta \phi(x,\lambda v)<0,$ for all $\lambda\in (0,\bar \lambda]$, as desired.
\medskip

\noindent {\bf (iv):} Using the definition of the function $\phi(x,v)$, given in \eqref{abEq}, it is easy to prove that  {$\phi(x,\cdot)$} is a sublinear function as well.  
Hence, $\phi(x,\cdot)$ is a convex function, and then, $\displaystyle\frac{\| v\|^2}{2}+\beta \phi(x,v)$ is a strongly convex function. Since $C$  is a convex set, $C-x$ is also convex and therefore,  subproblem \eqref{ee} has a unique minimizer.
\end{proof}

Based on Proposition \ref{prop1}(iii), we  {can} define   $v(x)$ as the unique solution of subproblem \eqref{ee} and  $y(x,v)$ is an element of the compact set $G(x)$
such that $y(x,v)^T J_F(x)v=\phi(x,v)$.  {Next} we will  discuss about the continuity {of the function \begin{equation}
\label{teta}
\theta_\beta(x):=\frac{\| v(x)\|^2}{2}+\beta \phi(x,v(x)),\end{equation} which is related with the one  defined in (35) of \cite{[21]}.} 

The following proposition is  {the constrained version} of Proposition 4.2 in \cite{nous}. {Items (i)-(ii), (iii) and (iv) can be seen as a version for the variable vector optimization of Proposition $3$ of \cite{[21]}, Proposition $2.5$ of \cite{[19]} and Proposition 3.4 of \cite{[20]}, respectively}.

\begin{proposition}\label{prop2} Let $F\in \mathcal{C}^1$ and fix $\beta>0$. Then, the following hold
 {\item[ {\bf(i)}]   $\theta_\beta(x)\leq 0$ for all $x\in C$.}
 {  \item[ {\bf(ii)}]   {$x\in C$} is a stationary point iff $\theta_\beta(x)=0$.}
 \item[ {\bf(iii)}] $\|v(x)\|\leq 2\beta\| J_F(x)\|$   {for all $x\in C$}. \item[ {\bf(iv)}]If $G$ is a closed map, then $\theta_\beta$ is an upper semi-continuous function on $C$.
\end{proposition}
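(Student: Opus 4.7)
My plan is to dispatch the four items in order, the first three by short direct arguments and the fourth by a careful transport-plus-selection argument exploiting the graph-closedness of $G$.

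For \textbf{(i)} I would simply evaluate the objective of \eqref{ee} at the admissible test direction $v=0$, which is feasible because $x\in C$. Since $\phi(x,\cdot)$ is positively homogeneous (as already used in the proof of Proposition \ref{prop1}(iii)), $\phi(x,0)=0$, so the minimum $\theta_\beta(x)$ is $\le 0$. For \textbf{(ii)}, the direction ``$x\in S^s\Rightarrow\theta_\beta(x)=0$'' follows from Proposition \ref{prop1}(ii): stationarity gives $\phi(x,v)\ge 0$ for every $v\in C-x$, so $\tfrac{\|v\|^2}{2}+\beta\phi(x,v)\ge 0$ on $C-x$, and combined with (i) this forces $\theta_\beta(x)=0$. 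For the converse, assume $x$ is not stationary; Proposition \ref{prop1}(ii)--(iii) produce a $\hat v\in C-x$ and a $\bar\lambda>0$ with $\tfrac{\|\lambda\hat v\|^2}{2}+\beta\phi(x,\lambda\hat v)<0$ for $\lambda\in(0,\bar\lambda]$. Writing $\hat v=\hat c-x$ with $\hat c\in C$, convexity of $C$ gives $\lambda\hat c+(1-\lambda)x\in C$, hence $\lambda\hat v\in C-x$, so $\theta_\beta(x)<0$.

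For \textbf{(iii)} I would use (i) again: $\tfrac{\|v(x)\|^2}{2}\le -\beta\phi(x,v(x))$. Writing $\phi(x,v(x))=y(x,v(x))^T J_F(x)v(x)$ with $y(x,v(x))\in G(x)\subseteq\mathbb S(0,1)$ and applying Cauchy--Schwarz yields $|\phi(x,v(x))|\le \|J_F(x)\|\,\|v(x)\|$. Substituting and dividing by $\|v(x)\|$ (the case $v(x)=0$ is trivial) gives $\|v(x)\|\le 2\beta\|J_F(x)\|$.

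Item \textbf{(iv)} is the one that will require real work. I would prove directly that $\limsup_{k\to\infty}\theta_\beta(x^k)\le \theta_\beta(x)$ whenever $x^k\to x$ in $C$. The key trick is a transport of the optimal direction at $x$ to a feasible direction at $x^k$: set $v^k:=v(x)+x-x^k$, so that $v^k+x^k=v(x)+x\in C$, i.e.\ $v^k\in C-x^k$. Optimality of $v(x^k)$ then gives
\[
\theta_\beta(x^k)\le \frac{\|v^k\|^2}{2}+\beta\,\phi(x^k,v^k),
\]
and $\|v^k\|^2\to\|v(x)\|^2$ is automatic. The remaining and main difficulty is the upper semicontinuity bound $\limsup_k\phi(x^k,v^k)\le \phi(x,v(x))$. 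I would pass to a subsequence attaining the $\limsup$, and along it pick $y^k\in G(x^k)$ with $\phi(x^k,v^k)=(y^k)^T J_F(x^k)v^k$. Since $G(x^k)\subseteq\mathbb S(0,1)$ the sequence $(y^k)$ is bounded, so a further subsequence converges to some $y$; the hypothesis that $Gr(G)$ is closed then forces $y\in G(x)$. Continuity of $J_F$ and $v^k\to v(x)$ give
\[
\lim_k \phi(x^k,v^k)=y^T J_F(x)v(x)\le \max_{\tilde y\in G(x)}\tilde y^T J_F(x)v(x)=\phi(x,v(x)),
\]
which combined with the first display yields $\limsup_k\theta_\beta(x^k)\le \theta_\beta(x)$. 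The main obstacle, as expected, is justifying the upper semicontinuity of $\phi$; the hypothesis that $G$ has closed graph, together with the uniform boundedness $G(\cdot)\subseteq\mathbb S(0,1)$, is exactly what is needed to carry out the selection argument above.
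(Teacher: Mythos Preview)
Your proof is correct and follows essentially the same route as the paper: items (i)--(iii) are identical in substance, and for (iv) your transported direction $v^k=v(x)+x-x^k$ is precisely the paper's $\hat x-x^k$ (with $\hat x=v(x)+x$), after which both arguments select maximizers $y^k\in G(x^k)\subseteq\mathbb S(0,1)$, extract a convergent subsequence, and invoke graph-closedness of $G$ to land the limit in $G(x)$. Your treatment is in fact slightly more careful in two places---explicitly checking $\lambda\hat v\in C-x$ via convexity in (ii), and first passing to a subsequence attaining the $\limsup$ in (iv)---but these are refinements of the same argument rather than a different approach.
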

\begin{proof}
{\noindent {\bf (i):} Note} that as $0\in C-x$ for all $x\in C$ and $\displaystyle\theta_\beta(x)\leq \frac{\|0\|^2}{2}+\beta\phi(x,0)=0.$

\noindent {{\bf (ii):}} As shown in Proposition \ref{prop1}(ii),
 $x$ is a non stationary point iff  for some $v\in C-x$, $\phi(x,v)<0$. Then, by Proposition \ref{prop1}(iii),  there exists  $\hat{v}\in C-x$ such that  $\displaystyle\frac{\lambda^2}{2}\|v\|^2+\lambda \beta\phi(x,v)< 0$ and hence  $\theta_\beta(x)<0$. 

\noindent {\bf (iii):} By (i), $0\geq \theta_\beta(x)=\displaystyle\frac{\|v(x)\|^2}{2}+\beta y(x,v(x))^T J_F(x)v(x).$ Then, after some algebra, we get
$$\frac{\|v(x)\|^2}{2}\leq -\beta y(x,v(x))^T J_F(x)v(x)\leq \beta\|  y(x,v(x))^T J_F(x)v(x) \|.$$
Using that $\| y(x,v(x))\|=1$, it follows from the above inequality that
$$\frac{\|v(x)\|^2}{2}\leq  \beta \| J_F(x)\|\|v(x)\|,$$ and the result follows {after dividing the above inequality by the} positive term  ${\|v(x)\|/2}\neq 0$.

\noindent {\bf (iv):} Now we prove the upper semi-continuity of the function $\theta_\beta$. Let $(x^k)_{k\in\NN}$ be a sequence converging to $x$. Take $\hat{x}\in C$ such that   $v(x)=\hat{x}-x$ and also denote $\hat x^k:=v^{k}+x^k$.
It is clear that, for all $k\in\NN$, $\hat{x}-x^k\in C-x^k$, and so,\begin{align}\label{yy}\nonumber \theta_\beta(x^k)&= \frac{\| \hat{x}^k-x^k\|^2}{2}+\beta \phi(x^k,\hat{x}^k-x^k)\\\nonumber&\le \frac{\| \hat{x}-x^k\|^2}{2}+\beta \phi(x^k,\hat{x}-x^k)\\&
=\frac{\| \hat{x}-x^k\|^2}{2}+\beta y_k^T J_F(x^k)(\hat{x}-x^k).\end{align}
Since each $y_k:=y(x^k,\hat x-x^k)$  belongs to the compact set $G(x^k)\subseteq K^*(x^k)\cap \mathbb{S}(0,1)\subseteq \mathbb{B}(0,1)$ for all $k\in\NN$, then the sequence $(y_k)_{k\in\NN}$ is bounded. Therefore, there exists a convergent subsequence of $(y_k)_{k\in\NN}$. We can assume  without lost of generality that $\lim_{k\to \infty} y_k= y$, and also since
$G$ is closed,    $y\in G(x)$. Taking limit in \eqref{yy}, we get
\begin{align*}\limsup_{k\to\infty} \theta_\beta(x^k)&\leq  \limsup_{k\to\infty}\displaystyle  \frac{\| \hat{x}-x^k\|^2}{2}\displaystyle+\beta y_k^T J_F(x^k)(\hat{x}-x^k)\\
&=\frac{\| \hat{x}-x\|^2}{2}+
\beta y^T J_F(x)(\hat{x}-x)\\&\leq
\frac{\| \hat{x}-x\|^2}{2}+\beta \phi(x,\hat{x}-x)=\theta_\beta(x).\end{align*}
Then, the function $\theta_\beta$, defined in \eqref{teta}, is upper semi-continuous.
\end{proof}
\begin{lemma}\label{remark1}
Consider any {$x, \hat x\in C$ and $z\in\RR^n$}. If  $ J_F$ is locally Lipschitz  {around $x$ for all $x\in C$}, $\mbox{d}_H(G(x),G(\hat x))\leq L_G\|x-\hat x\|$ for some $L_G>0$ and $C$ is bounded, then $${\left|\phi(x,z)-\phi(\hat x,z)\right|\leq L\|x-\hat x\|,}$$ for some $L>0$. Hence,  {for all $v \in \RR^n$, $\phi(\cdot,v)$} is a continuous function  {on $C$}. 
\end{lemma}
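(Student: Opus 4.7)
The plan is to estimate $\phi(x,z)-\phi(\hat x, z)$ in one direction, then swap the roles of $x$ and $\hat x$ and invoke symmetry. Fix $z\in\RR^n$ and pick $y_x\in G(x)$ attaining the maximum in \eqref{asconsec}, so $\phi(x,z)=y_x^T J_F(x)z$. Since $\mbox{d}_H(G(x),G(\hat x))\le L_G\|x-\hat x\|$, there exists $\hat y\in G(\hat x)$ with $\|y_x-\hat y\|\le L_G\|x-\hat x\|$, and clearly $\phi(\hat x,z)\ge \hat y^T J_F(\hat x)z$. Subtracting gives
\begin{equation*}
\phi(x,z)-\phi(\hat x,z)\;\le\;y_x^T J_F(x)z-\hat y^T J_F(\hat x)z.
\end{equation*}

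Next I would add and subtract $y_x^T J_F(\hat x)z$ to split the right-hand side into the two telescoping differences
\begin{equation*}
y_x^T\bigl(J_F(x)-J_F(\hat x)\bigr)z+(y_x-\hat y)^T J_F(\hat x)z.
\end{equation*}
Since $C$ is bounded and $J_F$ is locally Lipschitz, there exists a constant $L_J>0$ (depending on a bounded neighborhood of $C$) such that $\|J_F(x)-J_F(\hat x)\|\le L_J\|x-\hat x\|$ whenever $x,\hat x\in C$; since $J_F$ is continuous on the bounded set $C$, it is bounded there by some $M>0$. Using $\|y_x\|=1$ (as $y_x\in G(x)\subseteq \mathbb{S}(0,1)$), Cauchy--Schwarz yields
\begin{equation*}
\phi(x,z)-\phi(\hat x,z)\;\le\;\bigl(L_J+L_G M\bigr)\,\|x-\hat x\|\,\|z\|.
\end{equation*}

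An identical argument with the roles of $x$ and $\hat x$ exchanged produces the reverse inequality, and taking $L:=(L_J+L_G M)\|z\|$ gives the claimed Lipschitz estimate in the first argument; continuity of $\phi$ in $x$ for fixed $z$ is immediate. I do not anticipate any serious obstacle here: the only mildly delicate point is extracting a uniform Lipschitz constant for $J_F$ and a uniform bound $M$ on $\|J_F(\cdot)\|$, but both follow at once from local Lipschitzness, continuity of $J_F$, and the boundedness of $C$. The normalization $G(x)\subseteq \mathbb{S}(0,1)$ is what makes the Hausdorff bound on the dual generators translate cleanly into the estimate above.
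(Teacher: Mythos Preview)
Your proof is correct and follows essentially the same approach as the paper: both arguments split the difference using the Hausdorff bound on $G$ together with the (uniform) Lipschitz property of $J_F$ on the bounded set $C$, and bound the resulting two terms. The only difference is presentational---the paper routes the estimate through the auxiliary function $\rho(x,w)=\max_{y\in G(x)}y^Tw$ and invokes Proposition~4.1(iv) of \cite{nous} for its Lipschitz property, whereas you unpack that step directly, making your version self-contained.
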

\begin{proof}By Proposition~4.1(iv) of \cite{nous}, {and using the Lipschitz assumption for $G$ in $C$, $\rho(x,w)$, defined in \eqref{Drho}, is also a Lipschitz function for all $(x,w)\in C\times W$} for any bounded subset  $W\subset \RR^n$.  That is\begin{equation}\label{lipsro}
 |\rho(x_1,w_1)-\rho(x_2,w_2)|\leq \hat{L}\|x_1-x_2\| +\|w_1-w_2\|,
\end{equation} 
 {for all $x_1,x_2\in C$ and $w_1,w_2\in W$, where $\|w_i\|\le M,$ $i=1,2$ with $M>0$ and $\hat{L}:=L_GM$}.
Taking \eqref{lipsro} for  $x_1=x$, $x_2=x^k$, $w_1= J_F(x)(\hat{x}^k-x^k)$ and
$w_2= J_F(x^k)(\hat{x}^k-x^k)$, {we get} \begin{align*}
\Big|\rho\big(x, J_F(x)(\hat{x}^k-x^k)\big)   -\rho\big(x^k, J_F(x^k)(\hat{x}^k-x^k)\big)\Big| &\leq   \hat{L}\|x-x^k\|+\|( J_F(x)- J_F(x^k))(\hat{x}^k-x^k)\|
\\
&\leq  \hat{L}\|x-x^k\|+\| J_F(x)- J_F(x^k)\|\|\hat{x}^k-x^k\|,
\end{align*}
because of {the compactness of}  $C$ and {the continuity of} $J_F$, $\| J_F(x)(\hat{x}^k-x^k)\|\leq M$ for all $k\in\NN$ and $x\in C$.
Noting that
$$\phi(x,\hat{x}^k-x^k)-\phi(x^k,\hat{x}^k-x^k)=
\rho\left(x, J_F(x)(\hat{x}^k-x)\right)-\rho\left(x^k, J_F(x^k)(\hat{x}^k-x^k)\right),
$$ and due to $ J_F$ is locally Lipschitz and \eqref{lipsro}, it follows that
  \begin{equation}\label{eq33}\left|\phi(x,\hat{x}^k-x^k)-\phi(x^k,\hat{x}^k-x^k)\right|\leq (\hat{L}+L_F\hat M)\|x-x^k\|,
      \end{equation}  {for all $x\in C$ with $\hat{L}:=L_GM$ and $L_F$ the Lipschitz constant of $J_F$ and $\hat M>0$ such $\|\hat{x}^k-x^k\|\le \hat M$ for all $k\in\NN$}. {This proves the continuity of $\phi$ in the first argument}.
\end{proof}

Now we can prove the lower semicontinuity of $\theta_\beta$ {by following similar ideas of the result presented in Proposition $3.4$ of \cite{[20]} for vector optimization.}
 \begin{proposition}\label{item(iv)} Let $F\in \mathcal{C}^1$ and consider any $x, \hat x\in C$ {with $C$ bounded}. Then, if $\mbox{d}_H(G(x),G(\hat x))\leq L_G\|x-\hat x\|$ for some $L_G>0$ and $ J_F$ is locally Lipschitz  {around $x$ for all $x\in C$},  $\theta_\beta$ is a lower semicontinuous function on $C$.
\end{proposition}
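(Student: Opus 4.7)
The plan is to show that whenever $(x^k)_{k\in\NN}\subset C$ converges to $x\in C$, one has $\liminf_{k\to\infty}\theta_\beta(x^k)\ge\theta_\beta(x)$. I would start by passing to a subsequence (which I still index by $k$) along which $\theta_\beta(x^k)$ attains its $\liminf$, say $\theta_\beta(x^k)\to L$. Set $v^k:=v(x^k)$ so that $v^k+x^k\in C$ by feasibility of \eqref{ee}. By Proposition~\ref{prop2}(iii), $\|v^k\|\le 2\beta\|J_F(x^k)\|$; since $(x^k)$ is bounded (it converges) and $J_F$ is continuous, $(v^k)$ is bounded and I may extract a further subsequence such that $v^k\to\bar v$ for some $\bar v\in\RR^n$. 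Because $C$ is closed and $v^k+x^k\to \bar v+x\in C$, we have $\bar v\in C-x$, so $\bar v$ is a feasible point for problem \eqref{ee} at $x$.

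The key step is then to pass to the limit inside $\theta_\beta(x^k)=\tfrac12\|v^k\|^2+\beta\,\phi(x^k,v^k)$. The norm term is straightforward: $\|v^k\|^2\to\|\bar v\|^2$ by continuity of the norm. For the $\phi$ term I split
\[
|\phi(x^k,v^k)-\phi(x,\bar v)|\le |\phi(x^k,v^k)-\phi(x,v^k)|+|\phi(x,v^k)-\phi(x,\bar v)|.
\]
Since $(v^k)$ is bounded, Lemma~\ref{remark1} (applied with $z=v^k$) gives $|\phi(x^k,v^k)-\phi(x,v^k)|\le L\|x^k-x\|\to 0$. For the second term I invoke Remark~\ref{newone}, which asserts continuity of $\phi(x,\cdot)$, so $\phi(x,v^k)\to\phi(x,\bar v)$. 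Combining, $\phi(x^k,v^k)\to \phi(x,\bar v)$.

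Consequently
\[
L=\lim_{k\to\infty}\theta_\beta(x^k)=\tfrac12\|\bar v\|^2+\beta\,\phi(x,\bar v).
\]
But by definition $\theta_\beta(x)$ is the minimum over $C-x$ of $\tfrac12\|\cdot\|^2+\beta\,\phi(x,\cdot)$ and $\bar v\in C-x$, hence $\tfrac12\|\bar v\|^2+\beta\,\phi(x,\bar v)\ge\theta_\beta(x)$. Therefore $\liminf_{k\to\infty}\theta_\beta(x^k)=L\ge\theta_\beta(x)$, which is lower semicontinuity.

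The main obstacle is ensuring the joint passage to the limit in $\phi(x^k,v^k)$; this is where one needs simultaneously the Hausdorff-Lipschitz behavior of $G$ and the local Lipschitz behavior of $J_F$ (via Lemma~\ref{remark1}) to control the shift in the first argument, together with the sublinearity/continuity of $\phi(x,\cdot)$ to handle the shift in the second argument. The boundedness of $C$ is used to obtain a uniform Lipschitz constant $L$ independent of $k$ in Lemma~\ref{remark1}, and the feasibility limit $\bar v\in C-x$ is what finally lets us compare with $\theta_\beta(x)$.
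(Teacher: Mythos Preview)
Your argument is correct and follows the same core idea as the paper: use the minimizers $v(x^k)$ of the subproblems at $x^k$ to produce feasible test points for the subproblem at the limit $x$, and invoke Lemma~\ref{remark1} (together with continuity of $\phi(x,\cdot)$) to pass to the limit in the $\phi$-term.

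The only tactical difference is that the paper avoids extracting a convergent subsequence of $(v^k)$. Instead, writing $\hat x^k:=x^k+v(x^k)\in C$, it uses $\hat x^k-x\in C-x$ directly as a test point for each $k$, obtaining
\[
\theta_\beta(x)\le \tfrac12\|\hat x^k-x\|^2+\beta\,\phi(x,\hat x^k-x)
=\theta_\beta(x^k)+\text{(error terms)},
\]
and then shows the error terms vanish via Lemma~\ref{remark1} and elementary norm identities. Your route trades this for a compactness step (boundedness of $v^k$ via Proposition~\ref{prop2}(iii)) and is arguably cleaner, since the limit $\bar v\in C-x$ is immediately comparable to $\theta_\beta(x)$; the paper's version, in exchange, does not rely on Proposition~\ref{prop2}(iii). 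Both are equally valid and rest on the same continuity ingredients.
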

\begin{proof}
We consider the function $\theta_\beta(x)$. Note further that
\begin{align*}\label{eqm}
\theta_\beta(x)\leq& \beta\phi(x,\hat{x}^k-x)+\frac{\|\hat{x}^k-x\|^2}{2}\\
=&\theta_\beta(x^k)+\beta\left[\phi(x,\hat{x}^k-x)-\phi(x^k,\hat{x}^k-x^k)\right]+\frac{\|\hat{x}^k-x\|^2-\|\hat{x}^k-x^k\|^2}{2}\\
=&\theta_\beta(x^k)+\beta\left[\phi(x,\hat{x}^k-x)-\phi(x^k,\hat{x}^k-x^k)\right]+\frac{1}{2}\left[-2\langle \hat{x}^k, x^k-x\rangle +\|x\|^2-\|x^k\|^2\right].
\end{align*}
Thus, taking limit in
the previous inequality and using Lemma \ref{remark1}, we get
$$\lim_{k\to \infty}\phi(x,\hat{x}^k-x)-\phi(x^k,\hat{x}^k-x^k)=0.$$ Also, it is follows that $\lim_{k\to \infty}
	\frac{1}{2}\left[\|x\|^2-\|x^k\|^2\right]-\langle \hat{x}^k, x^k-x\rangle= 0.$ Hence,
\begin{align*} \theta_\beta(x)&\leq \liminf_{k\to\infty}\left\{
\theta_\beta(x^k)+\beta\left[\phi(x,\hat{x}^k-x)-\phi(x^k,\hat{x}^k-x^k)\right]-\langle \hat{x}^k, x^k-x\rangle +\frac{\|x\|^2-\|x^k\|^2}{2}\right\}\\
&= \liminf_{k\to\infty}  \theta_\beta(x^k),
\end{align*} establishing the desired result.
 \end{proof}
 
{Now we recall the concept of $\delta$-approximate direction introduced in Definition $3.1$ of \cite{[19]}.
\begin{definition}\label{deltasol} Let $x\in C$ and $\beta>0$. Given $\delta\in [0,1)$, we say that $v$ is a $\delta$-approximate solution of subproblem \eqref{ee} if $v\in C-x$ and
$\beta\phi(x,v)+\displaystyle\frac{\|v\|^2}{2}\leq (1-\delta) \theta_\beta(x).
$
If $v\neq 0$ we say that $v$ is a  $\delta$-approximate direction.
 \end{definition}}
Hence, from a numerical
point of view, it would be interesting to consider algorithms in which the line-search is {given over a} $\delta$-approximate {solution of subproblem \eqref{ee} instead of on an exact solution of it}.

\begin{remark} Note that if the solution of subproblem \eqref{ee} is $0$, then the only possible $\delta$-approximate solution is $v=0$. {In other case, since $\theta_\beta(x)<0$, there exist feasible directions $v$ such that} $$\beta\phi(x,v)+\frac{\|v\|^2}{2}\in\left[\theta_\beta(x), (1-\delta) \theta_\beta(x)\right].$$ In particular {$v(x)$}, the solution of subproblem \eqref{ee}, is always a $\delta$-approximate solution.\end{remark}
Next we present an inexact algorithm for solving problem \eqref{(P)}. {The algorithm requires
the following exogenous parameters: $ \delta\in[0,1)$ and $\sigma, {\gamma} \in(0,1)$ and $0<\bar \beta\le \hat{\beta}<+\infty$.}
\begin{center}\fbox{\begin{minipage}[b]{\textwidth}
\noindent{{\bf I}nexact {\bf P}rojected {\bf G}radient Method ({\bf IPG Method}).}\;\; {Assume that $\beta_k\in[\bar\beta,\hat{\beta}]$ for all $k\in\NN$}. 

\medskip

\noindent {\bf Initialization:}  Take $x^0\in \mathbb{R}^n$ and $\beta_0$.

\medskip

\noindent  {\bf Iterative step:} Given $x^k$ and $\beta_k$, compute $v^k$ a $\delta$-approximate solution of $(P_{x^k})$.
 If $v^k=0$, then stop. Otherwise compute
\begin{equation}\label{Armijo-type}
j(k):=\min\left\{j\in\NN \colon F(x^k)- F(x^k+\gamma^{j}v^k)+\sigma \gamma^{j} J_F(x^k)v^k\in K(x^k)\right\}.
\end{equation}
\noindent  Set
$
x^{k+1}=x^k+\gamma_kv^k\in C,
$
with $\gamma_k=\gamma^{j(k)}$.

\end{minipage}}\end{center}
{It is worth noting that {\bf IPG Method} extends Algorithm $3.3$ of \cite{[19]} to the variable order setting.}
Next proposition proves that the stepsize $\gamma_k$ is well defined for all $k\in \NN$, \emph{i.e.,} there exists a finite positive integer $j$ that fulfills Armijo-type rule given in \eqref{Armijo-type} at each step of {\bf IPG Method}. {The proof of the next result uses a similar idea to the presented in Proposition $2.2$ of \cite{[19]}.}
\begin{proposition} Subproblem \eqref{Armijo-type} has a finite solution, i.e.,  there exists an index $j(k)<+\infty$ which is solution of \eqref{Armijo-type}.
\end{proposition}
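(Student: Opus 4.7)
The plan is to argue by contradiction: suppose that for every $j\in\mathbb{Z}_+$ the Armijo-type inclusion \eqref{Armijo-type} fails, i.e.,
\begin{equation*}
F(x^k)-F(x^k+\gamma^j v^k)+\sigma\gamma^j J_F(x^k)v^k\notin K(x^k).
\end{equation*}
Because the algorithm reaches this step only when $v^k\neq 0$, the strategy is to first extract a strict descent property from the $\delta$-approximate direction $v^k$, and then exploit the $\mathcal{C}^1$-expansion of $F$ together with the openness of $\mathrm{int}(K(x^k))$ to contradict the nonmembership for large $j$.

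The first substantive step is to show $J_F(x^k)v^k\in-\mathrm{int}(K(x^k))$. Since $v^k$ is a $\delta$-approximate direction, Definition \ref{deltasol} gives $\beta_k\phi(x^k,v^k)+\tfrac{1}{2}\|v^k\|^2\leq(1-\delta)\,\theta_{\beta_k}(x^k)$. Proposition \ref{prop2}(i) yields $\theta_{\beta_k}(x^k)\leq 0$, so the right-hand side is nonpositive; combined with $\|v^k\|^2/2>0$ (as $v^k\neq 0$) and $\beta_k>0$, this forces $\phi(x^k,v^k)<0$. Proposition \ref{prop1}(i) then translates this strict inequality into $J_F(x^k)v^k\in-\mathrm{int}(K(x^k))$. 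Feasibility of the trial points is automatic from $v^k\in C-x^k$, the convexity of $C$, and $\gamma^j\in(0,1]$, so $x^k+\gamma^j v^k=(1-\gamma^j)x^k+\gamma^j(x^k+v^k)\in C$ and every quantity in \eqref{Armijo-type} is well defined.

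The heart of the argument is to divide the assumed failure by $\gamma^j>0$. Because $K(x^k)$ is a cone, one obtains
\begin{equation*}
-\frac{F(x^k+\gamma^j v^k)-F(x^k)}{\gamma^j}+\sigma J_F(x^k)v^k\notin K(x^k).
\end{equation*}
Since $F\in\mathcal{C}^1$ and $\gamma^j\to 0$ as $j\to\infty$, the first term tends to $-J_F(x^k)v^k$, so the whole left-hand side converges to $-(1-\sigma)J_F(x^k)v^k$. By the previous paragraph and the invariance of $\mathrm{int}(K(x^k))$ under multiplication by the positive scalar $1-\sigma$, this limit lies in $\mathrm{int}(K(x^k))$. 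As $\mathrm{int}(K(x^k))$ is open, the left-hand side must eventually enter it and therefore lie in $K(x^k)$, contradicting the assumed failure for that index. The subtle point to monitor—and the main obstacle of the proof—is that the nonmembership $\notin K(x^k)$ is preserved when dividing by $\gamma^j>0$ (because $K(x^k)$ is a cone) but \emph{not} under passing to the limit; the contradiction must therefore be extracted from the openness of $\mathrm{int}(K(x^k))$ rather than from any closedness property of $K(x^k)$ itself.
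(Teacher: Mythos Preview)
Your proof is correct and follows essentially the same approach as the paper: both derive $\phi(x^k,v^k)<0$ from the $\delta$-approximate property, invoke Proposition~\ref{prop1}(i) to get $J_F(x^k)v^k\in-\mathrm{int}(K(x^k))$, and then use the first-order expansion of $F$ together with the openness of $\mathrm{int}(K(x^k))$ to force the Armijo inclusion for large $j$. The only cosmetic differences are that the paper argues directly via the Taylor remainder $(\sigma-1)\gamma^{j}J_F(x^k)v^k+o(\gamma^{j})$, whereas you frame it as a contradiction and divide through by $\gamma^{j}$ before passing to the limit; mathematically these are the same maneuver.
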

\begin{proof} If $v^k=0$ then {{\bf IPG Method} stops}. Otherwise, if  $v^k\neq 0$ then by Proposition \ref{prop2}(ii), $x^k$ is not a stationary point and $\theta_{\beta_k}(x^k)<0$. Moreover, $$\beta_k\phi(x^k,v^k)\leq \beta_k\phi(x^k,v^k)+\frac{\|v^k\|^2}{2}\le (1-\delta) \theta_{\beta_k}(x^k)<0.$$
Note further that  $\phi(x^k,v^k)=\max_{y\in G(x^k)} y^T J_F(x^k)v^k<0.$ Thus, it follows from Proposition \ref{prop1}(i) that
\begin{equation}\label{tt}
  J_F(x^k)v^k\in -{\rm int}(K(x^k)).
\end{equation}
Using the Taylor expansion of $F$ at $x^k$, we obtain that
\begin{equation}\label{igual-para-armijo}F(x^k)+\sigma \gamma^{j} J_F(x^k)v^k- F(x^k+\gamma^{j}v^k)=(\sigma -1)\gamma^{j} J_F(x^k)v^k +o(\gamma^{j}).\end{equation}
Since $\sigma<1$ and $K(x^k)$ is a cone,  it follows from \eqref{tt} that  $(\sigma -1)\gamma^{j} J_F(x^k)v^k\in {\rm int}(K(x^k)).$
Then, there exists $\ell\in\NN$ such that, for all
$j\ge \ell$, we get $(\sigma -1)\gamma^{j} J_F(x^k)v^k +o(\gamma^{j})\in K(x^k).$ Combining the last inclusion with \eqref{igual-para-armijo}, we obtain
$F(x^k)+\sigma \gamma^{j} J_F(x^k)v^k- F(x^k+\gamma^{j}v^k)\in K(x^k)$ for all $j\ge \ell$. {Hence \eqref{Armijo-type} holds for $j(k)=\ell$}. \end{proof}
\begin{remark}{\rm After this {proposition} it is clear that given $(x^k,v^k)$, $j(k)$ is well-defined. Furthermore, the sequence generated by {\bf IPG Method} is always feasible. Indeed, as $x^k, x^k+v^k\in C$, $\gamma_k\in (0,1]$ and  $C$ is convex,   $x^{k+1}=x^k+\gamma_kv^k\in C$.} 
\end{remark}
\section{Convergence Analysis of IPG Method}

In this section we prove the convergence of {\bf IPG Method} presented in the previous section. First we consider the general case and then
the result is refined for $K$--convex functions. From now on, $(x^k)_{k\in\NN}$ denote the sequence generated by {\bf IPG Method}.
We begin the section with the following lemma.

\begin{lemma}\label{le} Let $F\in \mathcal{C}^1$.
Assume that $\cup_{x\in C} K(x)\subseteq \mathcal{K}$, where
$\mathcal{K}$ is a closed, pointed and convex cone. 
If   $x^*$ is an accumulation point of $(x^k)_{k\in\NN}$, then $\lim_{k\to\infty}F(x^k)= F(x^*)$.
\end{lemma}
 \begin{proof} Let $x^*$ be any accumulation point of the sequence $(x^k)_{k\in\NN}$ and denote $(x^{i_k})_{k\in\NN}$ a subsequence of $(x^k)_{k\in\NN}$ such that  $\lim_{k\to\infty}x^{i_k}= x^*$. It follows from the definition of Armijo-type line-search in \eqref{Armijo-type} that
\begin{equation}\label{fg}F(x^{k+1}) -F(x^k)-\sigma\gamma_k J_F(x^k)v^k\in -K(x^k).\end{equation}
Since {\bf IPG Method} does not stop after finitely many steps, $v_k\neq 0$, which means that {$\phi(x^k,v^k)<0$}. By  Proposition \ref{prop1}(i), this means that $ J_F(x^k)v^k\in -{\rm int}(K(x^k)).$
Multiplying the last inclusion by $\sigma\gamma_k>0$ and summing with \eqref{fg}, we get from the convexity of $K(x^k)$ that 
$$F(x^{k+1}) -F(x^k)-\sigma\gamma_k J_F(x^k)v^k+\sigma\gamma_k  J_F(x^k)v^k\in -{\rm int}(K(x^k)). $$ 
Thus, 
$  F(x^{k+1})-F(x^{k})\in - {\rm int}(K(x^k)).$
Since $\cup_{x\in C} K(x)\subseteq \mathcal{K}$ {from assumption}, it holds that ${\rm int}(K(x))\subseteq {\rm int}(\mathcal{K})$  for all $x$, and
$ F(x^{k+1})-F(x^{k})\in -{\rm int}( \mathcal{K}).$ {Hence, $(F(x^k))_{k\in\NN}$ is decreasing with respect to cone $\mathcal{K}$.
The continuity of $F$ imply that $\lim_{k\to \infty} F(x^{i_k})= F(x^*).$ 
Then, to prove that the whole sequence $(F(x^k))_{k\in\NN}$ converges to  $F(x^*)$, we use  {the fact that the whole sequence $(F(x^k))_{k\in\NN}$ is} decreasing with respect to cone $\mathcal{K}$, which is a closed, pointed and convex cone;
see, for instance,  {Proposition 3.1, pages 90, 91 of \cite{Peressini} and Example 23.4 of \cite{danielltammer}}. Thus, we get that
$\lim_{k\to \infty} F(x^k)= F(x^*),$ as desired.}\end{proof}

We present an analogous result as was proved in Proposition \ref{prop2}(iii) where $v^k$ is a $\delta$-solution of subproblem $\left(P_{x^k}\right)$, which gives us a upper bound for the norm of $v^k$. {Next lemma is a version of Proposition 2.5 of \cite{[19]} to the variable order setting.}
\begin{lemma}\label{opu} {Let $(x^k)_{k\in\NN}$ and $(\beta_k)_{k\in\NN}$ be sequences generated by {\bf IPG Method}. Then, $\|v^k\|\leq 2 \beta_k\| J_F(x^k)\|.$}\end{lemma}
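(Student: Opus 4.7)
The plan is to mimic the argument of Proposition \ref{prop2}(iii), where the analogous bound was shown for the exact solution $v(x)$, and simply adapt it to the $\delta$-approximate case via Definition \ref{deltasol}.

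First I would handle the trivial case $v^k = 0$, in which the inequality $\|v^k\|\le 2\beta_k\|J_F(x^k)\|$ is obvious. Otherwise, since $v^k$ is a $\delta$-approximate solution of $(P_{x^k})$, Definition \ref{deltasol} gives
\begin{equation*}
\beta_k\phi(x^k,v^k)+\frac{\|v^k\|^2}{2}\le (1-\delta)\,\theta_{\beta_k}(x^k).
\end{equation*}
By Proposition \ref{prop2}(i) we have $\theta_{\beta_k}(x^k)\le 0$, and since $\delta\in[0,1)$ implies $1-\delta>0$, the right-hand side is non-positive. Therefore
\begin{equation*}
\frac{\|v^k\|^2}{2}\le -\beta_k\phi(x^k,v^k)=-\beta_k\, y(x^k,v^k)^{T}J_F(x^k)v^k,
\end{equation*}
using the definition of $y(x^k,v^k)\in G(x^k)$ attaining the max in $\phi$.

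Next I would apply the Cauchy--Schwarz inequality together with $\|y(x^k,v^k)\|=1$ (which holds because $G(x^k)\subseteq\mathbb{S}(0,1)$) to obtain
\begin{equation*}
\frac{\|v^k\|^2}{2}\le \beta_k\,\|y(x^k,v^k)\|\,\|J_F(x^k)\|\,\|v^k\|=\beta_k\,\|J_F(x^k)\|\,\|v^k\|.
\end{equation*}
Dividing both sides by the positive quantity $\|v^k\|/2$ yields the desired bound $\|v^k\|\le 2\beta_k\|J_F(x^k)\|$.

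There is no real obstacle here: the only point that deserves care is to confirm that the $\delta$-approximation condition still forces the left-hand side $\beta_k\phi(x^k,v^k)+\|v^k\|^2/2$ to be non-positive, which uses both $\theta_{\beta_k}(x^k)\le 0$ from Proposition \ref{prop2}(i) and $\delta<1$. Everything else is the same chain of inequalities used for the exact minimizer in Proposition \ref{prop2}(iii).
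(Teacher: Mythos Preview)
Your proposal is correct and follows essentially the same approach as the paper: use the $\delta$-approximate condition together with $\theta_{\beta_k}(x^k)\le 0$ to get $\|v^k\|^2/2\le -\beta_k\phi(x^k,v^k)$, then finish exactly as in Proposition~\ref{prop2}(iii) via Cauchy--Schwarz and $\|y(x^k,v^k)\|=1$. In fact, you spell out the final steps that the paper leaves implicit, and your citation of Proposition~\ref{prop2}(i) for $\theta_{\beta_k}(x^k)\le 0$ is the correct one (the paper's reference to (ii) there is a slip).
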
\begin{proof} By  the definition of $\delta$-approximate direction
$\beta_k\phi(x^k,v^k)+\displaystyle\frac{\|v^k\|^2}{2}\leq (1-\delta) \theta_{\beta_k}(x^k).$
As was shown in Proposition \ref{prop2}(ii), $(1-\delta) \theta_{\beta_k}(x^k)\leq 0$, since $x^k\in C$. Thus, $\displaystyle\frac{\|v^k\|^2}{2}\leq -\beta_k\phi(x^k,v^k)$ and the result follows as in Proposition \ref{prop2}(iii).\end{proof}

Next we prove the stationarity of the accumulation points of the generate sequence. {Some arguments used in the proof of the next theorem are similar to those of Theorem $3.5$ of \cite{[19]} and Theorem $5.1$ of \cite{nous} for {fixed} and variable vector optimization, respectively.}
\begin{theorem}\label{t1}Suppose that
\item[ {\bf(a)}] $\cup_{x\in C} K(x)\subseteq \mathcal{K}$, where
$\mathcal{K}$ is a  closed, pointed and convex cone. \item[ {\bf(b)}] {The map $G$ is  closed}. \item[ {\bf(c)}] $\mbox{d}_H(G(x),G(\hat{x}))\leq L_G\|x-\hat{x}\|$, for all $x,\hat{x}\in C$.
\item[ {\bf(d)}] {$J_F$} is a locally Lipschitz function  {around $x$, for all $x\in C$}.
\item [] {If $(\beta_k)_{k\in\NN}$ is a bounded sequence}, then all {the} accumulation points of $(x^k)_{k\in\NN}$  are stationary points of problem \eqref{(P)}.
\end{theorem}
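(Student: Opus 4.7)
The plan is to argue by contradiction: suppose an accumulation point $x^*$ of $(x^k)_{k\in\NN}$, with $x^{i_k}\to x^*$, fails to be a stationary point of \eqref{(P)}, and derive a contradiction. First I pass to a further subsequence, without relabeling, along which four objects converge simultaneously: $\beta_{i_k}\to\beta^*\in[\bar\beta,\hat\beta]$ (by the boundedness hypothesis on $(\beta_k)_{k\in\NN}$), $v^{i_k}\to v^*$ (Lemma~\ref{opu} together with the continuity of $J_F$ makes $(v^{i_k})$ bounded), $\gamma_{i_k}\to\gamma^*\in[0,1]$, and $y^{i_k}:=y(x^{i_k},v^{i_k})\to y^*\in G(x^*)$, where the last inclusion uses closedness of $G$ from hypothesis (b). Since $x^*$ is non-stationary, Proposition~\ref{prop2}(ii) yields $\theta_{\beta^*}(x^*)<0$.

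The first technical step is to show that $\phi(x^*,v^*)<0$, and in particular $v^*\neq 0$. Writing $v(x^*)=\hat c-x^*$ with $\hat c\in C$ and $\tilde v_k:=\hat c-x^{i_k}\in C-x^{i_k}$, the definition of $\theta_{\beta_{i_k}}$ bounds it above by $\|\tilde v_k\|^2/2+\beta_{i_k}\phi(x^{i_k},\tilde v_k)$, and this bound tends to $\theta_{\beta^*}(x^*)$ by the joint continuity of $\phi$, which follows by combining Lemma~\ref{remark1} (continuity in the first argument under (c) and (d)) with the fact that $\phi(x,\cdot)=\rho(x,J_F(x)\,\cdot)$ is $\|J_F(x)\|$-Lipschitz in the second argument. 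Inserting this into the $\delta$-approximate inequality from Definition~\ref{deltasol} and passing to the limit yields
\[
\beta^*\phi(x^*,v^*)+\tfrac{\|v^*\|^2}{2}\le (1-\delta)\,\theta_{\beta^*}(x^*)<0,
\]
so $\phi(x^*,v^*)<0$, and Proposition~\ref{prop1}(i) gives $J_F(x^*)v^*\in-\inte(K(x^*))$ with $v^*\neq 0$.

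A case analysis on $\gamma^*$ then closes the argument. Testing the Armijo inclusion \eqref{Armijo-type} against $y^{i_k}\in K^*(x^{i_k})$ and using $F(x^k)\to F(x^*)$ from Lemma~\ref{le} (whose hypotheses (a)--(c) are present) produces $\gamma^*\phi(x^*,v^*)\ge 0$, which combined with $\phi(x^*,v^*)<0$ forces $\gamma^*=0$, i.e.\ $\gamma_{i_k}\to 0$. Consequently $j(i_k)\ge 1$ eventually, and with $\hat\gamma_k:=\gamma_{i_k}/\gamma\to 0$ the preceding candidate stepsize fails Armijo:
\[
F(x^{i_k})-F(x^{i_k}+\hat\gamma_k v^{i_k})+\sigma\hat\gamma_k J_F(x^{i_k})v^{i_k}\notin K(x^{i_k}).
\]
Dividing by $\hat\gamma_k>0$ preserves non-membership; since $K^*(x^{i_k})$ is positively generated by $G(x^{i_k})$, I extract a separating witness $\xi_k\in G(x^{i_k})$ giving a strict negative inner product against the displayed vector, and pass to a subsequence with $\xi_k\to\xi^*\in G(x^*)$ by closedness of $G$. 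Local Lipschitzness of $J_F$, hypothesis (d), controls the Taylor remainder, and the limit inequality reduces to $\langle\xi^*,J_F(x^*)v^*\rangle\ge 0$; but $\xi^*\in K^*(x^*)\setminus\{0\}$ (unit norm) combined with $J_F(x^*)v^*\in-\inte(K(x^*))$ forces $\langle\xi^*,J_F(x^*)v^*\rangle<0$, the required contradiction.

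The delicate point is the case $\gamma^*=0$: Armijo's failure at the previous candidate says only that the tested vector is \emph{outside} the cone $K(x^{i_k})$, a property that need not survive passage to a limit of closed cones whose fiber depends on $x^{i_k}$. The remedy is the separating-hyperplane device through $\xi_k\in G(x^{i_k})$, which converts non-membership into a scalar inequality that does survive thanks to closedness of $G$. A secondary technicality is the joint continuity of $\phi$, not stated verbatim in the excerpt; it is obtained by writing $|\phi(x,v)-\phi(x',v')|\le |\phi(x,v)-\phi(x',v)|+|\phi(x',v)-\phi(x',v')|$ and handling the two pieces by Lemma~\ref{remark1} and by the $\|J_F(x')\|$-Lipschitz second-argument dependence, respectively.
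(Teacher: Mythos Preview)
Your proof is correct and follows the paper's overall architecture: pass to convergent subsequences of $(x^{i_k},v^{i_k},\beta_{i_k},\gamma_{i_k})$, invoke Lemma~\ref{le} for $F(x^k)\to F(x^*)$, and split into the cases $\gamma^*>0$ and $\gamma^*=0$, using the failed Armijo step and a separating $\xi_k\in G(x^{i_k})$ in the latter. The supporting ingredients (Lemma~\ref{opu}, Lemma~\ref{remark1}, Proposition~\ref{prop1}, Proposition~\ref{prop2}) are exactly those the paper uses.

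There are two organizational differences worth recording. First, by adopting a global contradiction framing you establish $\phi(x^*,v^*)<0$ once, via the $\delta$-approximate inequality and an upper estimate on $\theta_{\beta_{i_k}}(x^{i_k})$; this makes your Case~1 a one-line contradiction ($\gamma^*\phi(x^*,v^*)\ge 0$), whereas the paper argues within Case~1 by supposing $\theta_{\beta_*}(x^*)<-\epsilon$ and running a separate $\epsilon$-argument through \eqref{jj}--\eqref{j}. Your route is shorter here. Second, in Case~2 you take a single diagonal limit with $\hat\gamma_k=\gamma_{i_k}/\gamma\to 0$, controlling the Taylor remainder uniformly via the local Lipschitz constant of $J_F$; the paper instead fixes an exponent $q$, lets $k\to\infty$, and then sends $q\to\infty$. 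The paper's double limit avoids the explicit remainder estimate, while your diagonal argument is more compact but leans harder on hypothesis~(d). Both are valid.
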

\begin{proof} Let $x^*$ be an accumulation point of the sequence $(x^k)_{k\in\NN}$. Denote $(x^{i_k})_{k\in\NN}$ any convergent subsequence to $x^*$.  Since $F\in \mathcal{C}^1$, Lemma \ref{opu} implies that the subsequence $(v^{i_k})_{k\in\NN}$ is also bounded and hence has a convergent subsequence. Without loss of generality, we assume that  $(v^{i_k})_{k\in\NN}$ converges to $v^*$, {$\beta_{i_k}$ and $\gamma_{i_k}$ converge to $\beta_*\ge \bar \beta$ and $\gamma^*$, respectively}.  {Recall that}  $\rho(x,w)=\max_{y\in G(x)} y^Tw$. 

By definition we have
$F(x^{k+1}) -F(x^k)-\sigma\gamma_k J_F(x^k)v^k\in -\mathcal{K}.$
Using  Proposition \ref{prop1}(i), implies that $\rho\left(x^{i_k},F(x^{k+1}) -F(x^k)-\sigma\gamma_k {J_F(x^k)v^k}\right)\leq 0$. Since the function
 $\rho$ is sublinear, as shown in
Proposition \ref{prop1} (iv), we get
\begin{equation}\label{bolita}
\rho\left(x^k, F(x^{k+1})-F(x^k)\right)\leq \sigma\gamma_k\rho\left(x^k, J_F(x^k)v^k\right) .
\end{equation}
{Using the semilinear property for $\rho$ in the second argument, we can rewrite}  \eqref{bolita} as
$\rho\left( x^k,F(x^k)\right) - \rho\left(x^k,F(x^{k+1})\right)\geq -\sigma\gamma_k\rho\left(x^k, J_F(x^k)v^{k}\right)\geq 0$. {Now 
considering the subsequences $(x^{i_k})_{k\in\NN}$  and $(v^{i_k})_{k\in\NN}$, where {$v^{i_k}=v(x^{i_k})$} on this inequality, we have}
$$\lim_{k\to\infty}  {\left[\rho\left(x^{i_k},F(x^{i_k})\right) - \rho\left(x^{i_k},F(x^{i_{k}+1})\right)\right]}\geq -\sigma \lim_{k\to\infty} \gamma_{i_k} \rho\left(x^{i_k},
 J_F(x^{i_k}) v^{i_k}\right)\geq 0.$$As already was observed in {the proof of} Lemma \ref{remark1},  {from  {\bf (c)} and  {\bf (d)} we have that}
$\rho$ is continuous and moreover from Lemma \ref{le}, we have $\lim_{k\to\infty} F(x^k)=F(x^*)$. Thus,
$$\lim_{k\to\infty}  {\left[ \rho\left(x^{i_k},F(x^{i_k})\right) - \rho\left(x^{i_k},F(x^{i_{k}+1})\right)\right]}=\rho\left(x^*,F(x^*)\right)-
\rho\left(x^*,F(x^*)\right)=0.$$
These facts imply that
$\lim_{k\to\infty} \gamma_{i_k} \rho\left(x^{i_k},  J_F(x^{i_k})v^{i_k}\right)=0.$
Hence we can split our analysis in two cases $\gamma^*> 0$ and $\gamma^*=0$.
\medskip 

\noindent {\bf Case 1:} $ \gamma^*> 0$.
 Here 
\begin{equation}\label{rrr}\lim_{k\to\infty} \phi(x^{i_k}, v^{i_k})=\lim_{k\to\infty} \rho\left(x^{i_k},  J_F(x^{i_k})v^{i_k}\right)=0.\end{equation}
Suppose that \begin{equation}\label{tttt}\theta_{\beta_*}(x^*)=\|v(x^*)\|^2/2+\beta_*\phi(x^*,v(x^*)) <-\epsilon<0, \end{equation} where $v(x^*)=\hat{x}-x^*$ {with $\hat x\in C$}. Due to the continuity of $\phi(\cdot,\cdot)$ in both arguments, Lemma \ref{remark1} and \eqref{rrr} imply that $$\phi(x^{i_k},v^{i_k})>-\frac{(1-\delta)\epsilon}{\max_{k\in\NN} \beta_k}=-\frac{(1-\delta)\epsilon}{\hat \beta}$$ for $k$ large enough. After note that $(\beta_k)_{k\in\NN}$ is a positive and bounded sequence, then
\begin{equation}\label{jj}
\|v^{i_k}\|^2/2+\beta_{i_k}\phi(x^{i_k},v^{i_k})\geq \beta_{i_k}\phi(x^{i_k},v^{i_k})>-\beta_{i_k}\frac{(1-\delta) \epsilon}{\hat\beta}\geq-(1-\delta) \epsilon.
\end{equation}
By definition of the subsequence $(v^{i_k})_{k\in\NN}$, we have,  for all $v^{i_k}\in C-x^{i_k}$ {and $v\in C-x^{i_k}$},
\begin{equation}\label{j}
(1-\delta)\left( \frac{\|v \|^2}{2}+\beta_{i_k} \phi(x^{i_k},v)\right)\geq (1-\delta)
\theta_{\beta_{i_k}}(x^{i_k})\geq 
 \frac{\|v^{i_k}\|^2}{2}+\beta_{i_k} \phi(x^{i_k},v^{i_k}).
\end{equation}
Combining \eqref{jj} and \eqref{j}, we obtain that $(1-\delta)\left(\displaystyle\frac{\|v\|^2}{2}+\beta_{i_k} \phi(x^{i_k},v)\right) >- (1-\delta)\epsilon$. 
In particular consider $\hat{v}^k=\hat{x} -x^{i_k}$. Dividing by $(1-\delta)>0$, we obtain 
$$  \frac{\|\hat{v}^k\|^2}{2}+\beta_{i_k} \phi(x^{i_k},\hat{v}^k) >- \epsilon . $$
By the continuity of function $\phi$ with respect to the first argument and taking limit in the previous inequality, lead us to the following inequality $\|v(x^*)\|^2/2+{\beta^*\phi(x^*,v(x^*))}\ge- \epsilon$. This fact and \eqref{tttt} imply
$$-\epsilon>\frac{\|v(x^*)\|^2}{2}+\beta_*  \phi({x^*},v(x^*))  \geq - \epsilon, $$ which is a contradiction.
Thus, we can conclude that  $\theta_{\beta_*}(x^*)\geq 0$ and, hence, using {Proposition \ref{prop2}},  $x^*$ is a stationary point if $\limsup_{k\to\infty}  \gamma_{i_k}> 0$.

\noindent {\bf Case 2:} $\gamma^*=0$.
We consider the previously defined convergent subsequences $(x^{i_k})_{k\in\NN} $, $(\beta_{i_k})_{k\in\NN}$, $(v^{i_k})_{k\in\NN} $, $(\gamma_{i_k})_{k\in\NN}$ convergent to $x^*$, $\beta_*$, $v^*$ and $\gamma^*=0$, respectively.  Since $\beta_*>0$,  we get that
$$\rho\left(x^{i_k}, J_F(x^{i_k})v^{i_k}\right) \leq \rho\left(x^{i_k}, J_F(x^{i_k})v^{i_k}\right) +\frac{\|v^{i_k}\|^2}{2\beta_{i_k}}.$$
Since $v^{i_k}$ is a $\delta$-approximate solution of {$(P_{x^{i_k}})$}, see Definition \ref{deltasol}, then  
$$  \rho\left(x^{i_k}, J_F(x^{i_k})v^{i_k}\right) +\frac{\|v^{i_k}\|^2}{2\beta_{i_k}}\leq \frac{(1-\delta)}{\beta_{i_k}}\theta_{\beta_{i_k}}(x^{i_k})<0.$$
 {Recalling that $C$ is closed and {\bf (b)} and {\bf (c)} hold by Propositions \ref{prop2} and \ref{item(iv)}, we have that $\theta$ is a continuous function. Taking limits above, we get} $\rho(x^*, J_F(x^*)v^*)\leq  - \displaystyle\frac{\|v^*\|^2}{2\beta_*}\leq 0.$
Fix $q\in \mathbb{N}$. Then, for $k$ large enough
$F(x^{i_k}+\gamma^{q}v^{i_k})\notin F(x^{i_k})+\sigma\gamma^q  J_F(x^{i_k})v^{i_k}- K(x^{i_k}),$
as there exists $\hat{y}_{i_k}\in G(x^{i_k})$ such that
$\left\langle  F(x^{i_k}+\gamma^{q}v^{i_k}) - F(x^{i_k}) -\sigma\gamma^q  J_F(x^{i_k})v^{i_k},  \hat{y}_{i_k}\right\rangle  >0,$
it holds that
$$\rho\left(x^{i_k},  F(x^{i_k}+\gamma^{q}v^{i_k})-F(x^{i_k})- \sigma\gamma^q  J_F(x^{i_k})v^{i_k}\right)\geq 0.$$
Taking limit as $k$ tends to $+\infty$,  and using that $\rho$ is a continuous function, then
$$\rho\left(x^*,F(x^*+\gamma^{q}v^*)-F(x^*)-\sigma\gamma^q  J_F(x^*)v^*\right)\geq 0.$$
But {$\rho(x,\cdot)$ is a positive homogeneous} function, so,

$$\rho\left(x^*,\frac{F(x^*+\gamma^{q}v^*)-F(x^*)}{\gamma^{q}}- \sigma  J_F(x^*) v^*\right) \geq 0.$$
Taking limit as $q$ tends to $+\infty$, we obtain $\rho\left(x^*,(1-\sigma)  J_F(x^*) v^*\right)\geq 0.$
Finally, since
$\rho\left(x^*,  J_F(x^*) v^*\right)\leq 0$, it holds
$\rho\left(x^*,  J_F(x^*) v^*\right)= 0$
and by Proposition \ref{prop1}(ii), this is equivalent to say that $x^*\in S^s.$
\end{proof}

The above result generalizes  Theorem $5.1$ of \cite{nous}, where the exact steepest descent method for  unconstrained problems was studied.  Recall that at the exact variant of the algorithm the direction $v^k$ is computed as an exact solution of problem  $(P_{x^k})$.  In order to fill the gap  between these two cases, we present two direct consequences  of the above result, the inexact method for unconstrained problems and the exact method for the constrained problem.

\begin{corollary}Suppose that   conditions (a)-(d) of Theorem \ref{t1} are fulfilled.  
Then all accumulation points of the sequence $(x^k)_{k\in\NN}$ generated by the exact variant of {\bf IPG Method} are stationary points of problem \eqref{(P)}.
\end{corollary}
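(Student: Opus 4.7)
The plan is to recognize that the exact variant of \textbf{IPG Method} is nothing but the special instance of the inexact scheme obtained by setting the tolerance parameter $\delta$ equal to $0$, and then to invoke Theorem~\ref{t1} directly. Concretely, recall Definition~\ref{deltasol}: a vector $v\in C-x$ is a $\delta$-approximate solution of \eqref{ee} when
\begin{equation*}
\beta\phi(x,v)+\frac{\|v\|^2}{2}\leq (1-\delta)\,\theta_\beta(x).
\end{equation*}
Taking $\delta=0$ this reduces to $\beta\phi(x,v)+\|v\|^2/2\leq \theta_\beta(x)$, which by the very definition \eqref{teta} of $\theta_\beta$ forces $v$ to be the (unique, by Proposition~\ref{prop1}(iv)) exact minimizer $v(x)$ of subproblem \eqref{ee}. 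Hence the exact projected gradient method is recovered from \textbf{IPG Method} by the admissible choice $\delta=0\in[0,1)$.

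Having made this identification, I would simply check that every hypothesis required by Theorem~\ref{t1} is in place. Assumptions (a)--(d) are inherited verbatim from the statement of the corollary, and the boundedness of $(\beta_k)_{k\in\NN}$ is already built into the algorithmic setup, where each $\beta_k$ is chosen in the fixed interval $[\bar\beta,\hat\beta]$ with $0<\bar\beta\leq\hat\beta<+\infty$. Therefore all hypotheses of Theorem~\ref{t1} are satisfied for the sequence $(x^k)_{k\in\NN}$ generated by the exact variant, and its conclusion yields that every accumulation point of $(x^k)_{k\in\NN}$ is a stationary point of problem \eqref{(P)}.

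There is essentially no obstacle here: the only thing worth mentioning is the verification that $\delta=0$ is a legitimate value of the tolerance parameter (which it is, since the admissible range is the half-open interval $[0,1)$) and that an exact solution of \eqref{ee} automatically satisfies the $0$-approximate condition with equality. Once those points are noted, the corollary is a direct specialization of Theorem~\ref{t1} and requires no further argument.
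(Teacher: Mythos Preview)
Your proposal is correct and follows exactly the paper's approach: the paper's own proof is the single line ``Apply Theorem~\ref{t1} to the case $\delta=0$,'' and your argument simply spells out in more detail why $\delta=0$ is admissible and why an exact minimizer of \eqref{ee} is a $0$-approximate solution.
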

\begin{proof} Apply Theorem \ref{t1} to the case {$\delta=0$}.\end{proof} 

\begin{corollary}
Suppose that conditions (a)-(d) of Theorem \ref{t1} are fulfilled for $C=\RR^n$.  
If $(\beta_k)_{k\in\NN}$ is a bounded sequence, then all accumulation points of $(x^k)_{k\in\NN}$  computed by {\bf IPG Method} are stationary points of problem \eqref{(P)}.
\end{corollary}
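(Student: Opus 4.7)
The plan is to reduce this corollary directly to Theorem \ref{t1} with the feasible set taken to be $C=\RR^n$. The only concern is that some of the supporting results, notably Lemma \ref{remark1} and Proposition \ref{item(iv)}, were stated under the hypothesis that $C$ is bounded, which obviously fails here. I would resolve this by a standard localization: since the conclusion concerns an individual accumulation point $x^*$, one may restrict attention to a bounded closed ball around $x^*$ that captures the relevant tail of the iterates, and then invoke the bounded-domain results on that ball.

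Concretely, I would fix an accumulation point $x^*$ together with a subsequence $(x^{i_k})_{k\in\NN}$ with $x^{i_k}\to x^*$. By Lemma \ref{opu} we have $\|v^{i_k}\|\leq 2\beta_{i_k}\|J_F(x^{i_k})\|$, and since $(\beta_{i_k})_{k\in\NN}$ is bounded by hypothesis and $J_F$ is continuous on $\{x^{i_k}\}_{k\in\NN}\cup\{x^*\}$, the sequence $(v^{i_k})_{k\in\NN}$ is bounded as well. As $\gamma_{i_k}\in(0,1]$, the successors $x^{i_k+1}=x^{i_k}+\gamma_{i_k}v^{i_k}$ are likewise bounded, so one can choose $R>0$ large enough that $\{x^{i_k},\, x^{i_k+1}\}_{k\in\NN}$ sits inside the bounded closed convex set $\tilde{C}:=\overline{\mathbb{B}}(x^*,R)$.

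Having localized, I would then run the proof of Theorem \ref{t1} verbatim with $\tilde{C}$ playing the role of $C$. Hypotheses (a)--(d) all restrict to $\tilde{C}\subseteq\RR^n$ (in particular, the local Lipschitz property of $J_F$ from (d) furnishes a global Lipschitz constant on the compact $\tilde{C}$), so Lemma \ref{remark1} and Proposition \ref{item(iv)} apply to give continuity of $\phi(\cdot,z)$ and lower semicontinuity of $\theta_{\beta}$ on $\tilde{C}$. The dichotomy $\gamma^*>0$ versus $\gamma^*=0$ analyzed in the proof of Theorem \ref{t1} then carries over unchanged and yields $x^*\in S^s$. The only point that requires verification is that every evaluation of $G$, $J_F$, $\phi$, and $\theta_{\beta}$ used in that argument involves points already placed in $\tilde{C}$, which is immediate from the construction; no substantive new obstacle arises, and the unconstrained character $C=\RR^n$ is in fact convenient, since the $\delta$-approximate direction $v^{i_k}$ is then constrained only through the norm bound of Lemma \ref{opu}.
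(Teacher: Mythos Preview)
Your reduction to Theorem \ref{t1} is exactly the paper's approach: the paper's entire proof is the single sentence ``Directly by applying Theorem \ref{t1} for $C=\RR^n$.'' Since Theorem \ref{t1} as \emph{stated} does not assume $C$ bounded, the corollary is an immediate specialization and no further argument is required.

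Your localization step is therefore superfluous at the level of this corollary. The concern you raise---that Lemma \ref{remark1} and Proposition \ref{item(iv)} carry a boundedness hypothesis on $C$---is really a comment on the \emph{proof} of Theorem \ref{t1}, not on how this corollary is derived from it. If you accept Theorem \ref{t1} as a black box, nothing more is needed here. Your observation that the relevant continuity/Lipschitz estimates in that proof are only applied along a subsequence living in a compact set is correct and is precisely how one would patch the boundedness issue inside the proof of Theorem \ref{t1}; but that patch belongs there, not in the proof of the present corollary. One small caution if you do pursue the localization: replacing $C$ by a ball $\tilde C$ changes the feasible set of the subproblem $(P_{x})$ and hence the value $\theta_\beta(x)$, so you cannot literally ``run the proof with $\tilde C$ playing the role of $C$''; rather, you keep $C=\RR^n$ in all definitions and only use the compact localization to justify the continuity statements for $\rho$ and $\phi$ at the specific points that arise.
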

\begin{proof} {Directly by applying Theorem \ref{t1} for $C=\RR^n$}. \end{proof}

The result presented in Theorem~\ref{t1} assumes the existence of accumulation points. We want to emphasize that this is a fact that takes place even when the projected gradient method is applied to the solution of classical scalar problems, i.e., $m=1$ and $K(x)=\RR_+$. The convergence of the whole sequence generated by the algorithm is only possible under stronger assumptions as convexity.
Now, based on quasi-F\'ejer theory,  we will prove the full convergence of the sequence generated by {\bf IPG Method} when we assume that $F$ is $K$--convex. We start by presenting its {definitions} and its
properties.
\begin{definition}\label{def-cuasi-fejer}
Let $S$ be a nonempty subset of $\mathbb{R}^n$. A
sequence $(z^k)_{k\in\NN}$ is said to be quasi-Fej\'er convergent
to $S$ iff for all $x \in S$, there exists $\bar{k}$ and a summable sequence
$(\varepsilon_k)_{k\in\NN}\subset \mathbb{R}_+$ such that $\| z^{k+1}-x\|^2 \leq \|
z^{k}-x\|^2+\varepsilon_k$ for all $k\ge\bar{k}$.
\end{definition}

This definition originates in \cite{browder} and has been further
elaborated in \cite{IST}. A useful result on quasi-Fej\'er sequences is the following.
\begin{fact}\label{cuasi-Fejer}
If $(z^k)_{k\in\NN}$ is quasi-Fej\'er convergent to $S$ then,
 \item[ {\bf(i)}] The sequence $(z^k)_{k\in\NN}$ is bounded.
 \item[ {\bf(ii)}] If an accumulation point
of $(z^k)_{k\in\NN}$ belongs to $S$, then the whole sequence
$(z^k)_{k\in\NN}$ converges.
\end{fact}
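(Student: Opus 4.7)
The plan is to prove the two parts separately, using only the defining inequality of quasi-Fej\'er convergence together with elementary facts about summable series. For (i), I would fix an arbitrary $x\in S$ and iterate the bound $\|z^{k+1}-x\|^2\leq \|z^{k}-x\|^2+\varepsilon_k$ starting from $k=\bar k$. A telescoping argument then yields $\|z^k-x\|^2\leq \|z^{\bar k}-x\|^2+\sum_{j=\bar k}^{k-1}\varepsilon_j$, and since $(\varepsilon_j)_{j\in\NN}$ is summable, the right-hand side is uniformly bounded in $k$. The finite initial segment $z^0,\ldots,z^{\bar k-1}$ is trivially bounded, so the whole sequence $(z^k)_{k\in\NN}$ stays in a ball around $x$.

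For (ii), let $x^{*}\in S$ be an accumulation point; I aim to show the whole sequence converges to $x^{*}$. The main difficulty is that the real sequence $a_k:=\|z^k-x^{*}\|^2$ is not monotone in general, because of the perturbation $\varepsilon_k$. The standard remedy is to monotonize it by introducing the tail sum $t_k:=\sum_{j=k}^{\infty}\varepsilon_j$ and considering the auxiliary sequence $b_k:=a_k+t_k$. The quasi-Fej\'er inequality $a_{k+1}\leq a_k+\varepsilon_k$ translates into $b_{k+1}\leq b_k$ for every $k\geq \bar k$, so $(b_k)$ is non-increasing, bounded below by $0$, and therefore converges to some $\ell\geq 0$. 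Because $t_k\to 0$ by summability, $a_k=b_k-t_k$ also converges, to the same limit $\ell$.

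Finally, I would exploit the accumulation-point hypothesis: there is a subsequence $(z^{k_j})_{j\in\NN}$ converging to $x^{*}$, so $a_{k_j}\to 0$. Combined with $a_k\to \ell$, this forces $\ell=0$, hence $\|z^k-x^{*}\|\to 0$, which is precisely the claimed convergence of the whole sequence. No step is technically demanding; the only subtle ingredient is the choice of the monotone surrogate $b_k=a_k+t_k$, and that trick is exactly what the summability assumption on $(\varepsilon_k)_{k\in\NN}$ is tailored to enable.
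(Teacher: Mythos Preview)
Your argument is correct and is the standard proof of this classical fact. The paper itself does not give a proof but simply refers to Theorem~1 of \cite{bmauricio}; your write-up essentially reproduces that argument, so there is no discrepancy to discuss.
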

\begin{proof}  {See  Theorem $1$ of \cite{bmauricio}.}\end{proof}

For guaranteeing the convergence of {\bf IPG Method}, we introduce the following definition {which is related with the one presented in Definition $4.2$ of \cite{[19]}.}
\begin{definition}\label{scompatible}Let $x\in C$. A direction $v\in C - x$ is scalarization compatible (or
simply s-compatible) at $x$ if there exists $w\in {\rm conv}(G(x))$ such that
$v = P_{C-x}(-\beta J_F(x)w).$\end{definition}

In the following we present the relation between inexact and s-compatible directions.

\begin{proposition} Let $x\in C$, $w\in {\rm conv}(G(x))$, $v = P_{C-x}( -\beta J_F(x)w)$ and $\delta\in [0, 1)$.
If
$$\beta\phi( J_F(x)v)\leq (1-\delta)\beta \langle w,  J_F(x)v\rangle - \frac{\delta}{2}\|v\|^2,$$
then $v$ is a $\delta$-approximate projected gradient direction.\end{proposition}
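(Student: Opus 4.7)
The plan is to rewrite the projection defining $v$ as the minimizer of a strongly convex quadratic, compare it pointwise with the minimizer of $(P_x)$, and then close the gap using the hypothesis. First, observe that $v=P_{C-x}(-\beta J_F(x)^{\top}w)$ is, up to a constant independent of $v'$, the unique minimizer over $v'\in C-x$ of
$$q(v'):=\tfrac{1}{2}\|v'\|^{2}+\beta\langle w,J_F(x)v'\rangle.$$
In particular $v\in C-x$, so the feasibility required by Definition \ref{deltasol} holds automatically, and the optimality of $v$ yields $q(v)\le q(v')$ for every $v'\in C-x$.

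Next I would exploit the assumption $w\in{\rm conv}(G(x))$. Writing $w=\sum_{i}\lambda_{i}y_{i}$ with $y_{i}\in G(x)$, $\lambda_{i}\ge 0$, $\sum_{i}\lambda_{i}=1$, the definition $\phi(x,v')=\max_{y\in G(x)}\langle y,J_F(x)v'\rangle$ gives
$$\langle w,J_F(x)v'\rangle=\sum_{i}\lambda_{i}\langle y_{i},J_F(x)v'\rangle\le\phi(x,v')\qquad\text{for every }v'\in\RR^{n}.$$
Applying $q(v)\le q(v(x))$ (optimality at $v'=v(x)\in C-x$) together with the above bound at $v'=v(x)$ produces
$$\tfrac{1}{2}\|v\|^{2}+\beta\langle w,J_F(x)v\rangle\le\tfrac{1}{2}\|v(x)\|^{2}+\beta\langle w,J_F(x)v(x)\rangle\le\tfrac{1}{2}\|v(x)\|^{2}+\beta\phi(x,v(x))=\theta_{\beta}(x)\le 0,$$
where the last bound is Proposition \ref{prop2}(i).

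Finally I would add $\tfrac{1}{2}\|v\|^{2}$ to both sides of the hypothesis and factor out $1-\delta$, obtaining
$$\beta\phi(x,v)+\tfrac{1}{2}\|v\|^{2}\le(1-\delta)\Bigl[\beta\langle w,J_F(x)v\rangle+\tfrac{1}{2}\|v\|^{2}\Bigr].$$
The bracket is nonpositive by the display above, and $1-\delta>0$, so replacing the bracket by the larger quantity $\theta_{\beta}(x)$ preserves the inequality and gives $\beta\phi(x,v)+\tfrac{1}{2}\|v\|^{2}\le(1-\delta)\theta_{\beta}(x)$, which is exactly the condition in Definition \ref{deltasol}. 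The only subtlety is keeping the sign convention straight at the last step: because the bracketed quantity is $\le\theta_{\beta}(x)\le 0$ and we multiply by a positive factor, the substitution goes in the correct direction; aside from this bookkeeping, the argument reduces to the variational characterization of the projection and the standard linearization bound $\langle w,J_F(x)\cdot\rangle\le\phi(x,\cdot)$.
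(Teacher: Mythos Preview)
Your proof is correct. The paper itself does not give an argument here and simply defers to Proposition~4.3 of \cite{[19]}; your derivation---reading the projection as the minimizer of the quadratic $q(v')=\tfrac12\|v'\|^{2}+\beta\langle w,J_F(x)v'\rangle$, comparing $q(v)$ with $q(v(x))$ via the linearization bound $\langle w,J_F(x)\cdot\rangle\le\phi(x,\cdot)$, and then invoking the hypothesis---is exactly the standard route that \cite{[19]} uses in the constant-cone case, so there is no substantive difference in approach.
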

\begin{proof} See Proposition 4.3 of \cite{[19]}. \end{proof}

We start the analysis with a technical result {which is related with the proof of Lemma $5.3$ of \cite{[19]}.} 
\begin{lemma}\label{auxi1} Suppose that $F$ is $K$--convex. Let $(x^k)_{k\in\NN}$ be a sequence generated by
{\bf IPG Method} where $v^k$ is an $s$-compatible direction at $x^k$, given by
{$v^k =  P_{C-x^k}(-\beta_k J_F( {x^k})w^k)$}, with $w^k\in {\rm conv}(G(x^k))$ for all $k\in\NN$. If for a given $\hat{x}\in C$ we have
$F(\hat{ x})-F(x^k)\in -K(x^k)$, then
$$\|x^{k+1}- \hat{x}\|^2 \leq \|x^k- \hat{x}\|^2+
 2\beta_k\gamma_k|\langle w^k, J_F(x^k)v^k\rangle|.$$\end{lemma}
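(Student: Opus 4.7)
The plan is to decompose $\|x^{k+1}-\hat x\|^2$ using the update $x^{k+1}=x^k+\gamma_k v^k$, writing
\[
\|x^{k+1}-\hat x\|^2 = \|x^k-\hat x\|^2 + 2\gamma_k\langle v^k,\,x^k-\hat x\rangle + \gamma_k^2\|v^k\|^2,
\]
and then to control the cross term $\langle v^k, x^k-\hat x\rangle$ via the variational characterization of the projection. Since $v^k = P_{C-x^k}(-\beta_k J_F(x^k)w^k)$ and $\hat x-x^k\in C-x^k$, the projection inequality tested at $u=\hat x-x^k$ reads
\[
\langle -\beta_k J_F(x^k)w^k - v^k,\,(\hat x-x^k)-v^k\rangle \leq 0,
\]
and I would rearrange this to
\[
\langle v^k,\,x^k-\hat x\rangle \leq -\|v^k\|^2 + \beta_k\langle w^k, J_F(x^k)(\hat x-x^k)\rangle - \beta_k\langle w^k, J_F(x^k)v^k\rangle.
\]

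The next step is to show that the first $\beta_k$-term on the right is nonpositive. By the gradient inclusion for $K$-convex functions recalled just after the definition of $K$-convexity, $F(\hat x)-F(x^k) \in J_F(x^k)(\hat x-x^k)+K(x^k)$, so that $J_F(x^k)(\hat x-x^k) = (F(\hat x)-F(x^k))-k_1$ for some $k_1\in K(x^k)$. Combining this with the standing hypothesis $F(\hat x)-F(x^k)\in -K(x^k)$ and using that $K(x^k)$ is a convex cone yields $J_F(x^k)(\hat x-x^k)\in -K(x^k)$. Because $w^k\in\conv(G(x^k))\subseteq K^*(x^k)$, pairing against $w^k$ gives $\langle w^k, J_F(x^k)(\hat x-x^k)\rangle \leq 0$.

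Finally I would assemble the pieces: substituting the bound on $\langle v^k,x^k-\hat x\rangle$ back into the decomposition, using $\gamma_k\in(0,1]$ so that the coefficient $(\gamma_k^2-2\gamma_k)\|v^k\|^2$ is nonpositive, and dropping the nonpositive $K$-convexity contribution, leaves
\[
\|x^{k+1}-\hat x\|^2 \leq \|x^k-\hat x\|^2 - 2\gamma_k\beta_k\langle w^k, J_F(x^k)v^k\rangle,
\]
and the claim follows at once from $-a\leq |a|$. The only delicate point is the sign bookkeeping that couples the projection variational inequality with the $K$-convexity inclusion and the dual-cone membership $w^k\in K^*(x^k)$; once those signs are aligned, the remaining manipulations are routine.
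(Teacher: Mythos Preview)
Your proof is correct and follows essentially the same approach as the paper: expand $\|x^{k+1}-\hat x\|^2$, apply the obtuse-angle property of the projection with $v=\hat x-x^k$, use $K$--convexity together with $w^k\in K^*(x^k)$ to drop the term $\beta_k\langle w^k,J_F(x^k)(\hat x-x^k)\rangle$, and finish using $\gamma_k\in(0,1]$. The only cosmetic differences are that the paper bounds $\langle w^k,J_F(x^k)(\hat x-x^k)\rangle$ via the intermediate inequality $\langle w^k,J_F(x^k)(\hat x-x^k)\rangle\le\langle w^k,F(\hat x)-F(x^k)\rangle$ rather than first showing $J_F(x^k)(\hat x-x^k)\in -K(x^k)$, and the paper records explicitly that $\langle w^k,J_F(x^k)v^k\rangle<0$ (from $J_F(x^k)v^k\in -\inte(K(x^k))$) instead of invoking $-a\le|a|$ at the end.
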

\begin{proof} Since $x^{k+1} = x^k + \gamma_kv^k$, we have
$\|x^{k+1}- \hat{x}\|^2 =\|x^k- \hat{x}\|^2+\gamma_k^2\|v^k\|^2-2\gamma_k\langle v^k,\hat{x}-x^k\rangle.$
Let us analyze the rightmost term of the above expression. It follows from the definition of $v^k$
and the obtuse angle property of projections that
$\langle-\beta_k J_F(x^k)w^k-v^k, v-v^k\rangle\leq  0,$ for all $v\in C- x^k$.
Taking $v = \hat{x}- x^k\in C-x^k$ on the above inequality, we obtain
$$-\langle v^k, \hat{x}-x^k\rangle \leq  \beta_k\langle w^k, J_F(x^k)( \hat{x}-x^k)\rangle- \beta_k\langle w^k,   J_F(x^k)v^k\rangle-\|v^k\|^2.$$
Now, it follows from the convexity of $F$ that $\langle w^k, J_F(x^k)(\hat{x}-x^k)\rangle \leq  \langle w^k,F(\hat{x})-F(x^k)\rangle$. Also the fact {$F(\hat{x})\preceq_{K(x^k)}  F(x^k)$, i.e., $F(\hat{x})-F(x^k)\in -K(x^k)$,} together with $w^k\in K^*(x^k)$ imply that $\langle w^k,F(\hat{x})-F(x^k)\rangle \leq 0$. Moreover, {by using $J_F(x^k)v^k  \in {\rm int}(-K(x^k))$ and $w^k$ in $ {\rm conv}(G(x^k))=K^*(x^k)$}, we have 
$\langle w^k,  J_F(x^k)v^k\rangle < 0$ . Thus,  we get 
$-\langle v^k,\hat{x}-x^k\rangle \leq \beta_k|\langle w^k,  J_F(x^k)v^k\rangle|-\|v^k\|^2.$
The result follows because  $\gamma_k\in (0,1]$.\end{proof}

We still need to make a couple of supplementary assumptions, which are standard
in convergence analysis of classical (scalar-valued) methods and in its extensions to the vector
optimization setting.

\noindent {\bf Assumption 4.4:} Let  $(z^k)_{k\in\NN}\in F(C)$ be a sequence  such that   $z^k-  z^{k+1}\in K(x^k)$ for all $k\in\NN$
and $z\in F(C)$, $z^k-z\in \mathcal{K}$ for some closed, convex and pointed cone $\mathcal{K}$, $\cup_{k\in\NN} K(x^k)\subset \mathcal{K}$. Then there exists $\hat{x}\in C$ such that {$F(\hat{x})\preceq_\mathcal{K} z^k$ for all $k\in\NN$, i.e., $F(\hat{x})-z^k\in-\mathcal{K}$}. {In this case, we said that the sequence $(z^k)_{k\in\NN}$ is bounded below with respect to $\mathcal{K}$.}

Recently, it was observed in \cite{Kim-2016} that this assumption could be replaced by assuming that the restriction of $F$ on $C$ has compact sections. This assumption is related to the completeness of the image of $F$. It is important to mention that completeness is a standard assumption for ensuring existence of efficient points in vector problems in \cite{luc}.

\noindent {\bf Assumption 4.5:} The search direction $v^k$ is s-compatible at $x^k$, that is to say, $v^k =P_{C-x^k} (-\beta J_F(x^k)^Tw^k)$, where $w^k\in {\rm conv}(G(x^k))$ for all $k\in\NN$.

This assumption holds automatically in the exact case. Moreover, it has been widely used in the literature in the vector case; see, for instance, \cite{[19]}. {Version of these  assumptions are also used in \cite{[19]}, when the order is given by a constant cone.} 

{The} next result is an extension to the variable order setting of Theorem  {5.6} of \cite{[19]}.

\begin{theorem}\label{teo4.3g} Assume that $F$ is $K$--convex and that Assumptions 4.4 and 4.5 hold. If ${\rm int}(\cap_{k\in\NN} K(x^k))\neq \varnothing$  and there exists $\mathcal{K}$, a pointed, closed and convex cone such that {$K(x^k)\subset \mathcal{K}$ for all $k\in\NN$},
then every sequence generated by the inexact projected gradient method {\rm (}{\bf IPG Method}{\rm)} is bounded and its accumulation points are 
 weakly efficient solutions.\end{theorem}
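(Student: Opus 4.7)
The plan is to combine a quasi-Fej\'er convergence argument---which supplies boundedness---with the stationarity result of Theorem~\ref{t1} and the variable-order conversion of stationarity into weak efficiency provided by Proposition~\ref{note1}(i).

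I would first invoke Assumption~4.4 to produce a reference point. From the proof of Lemma~\ref{le} the image sequence $(F(x^k))_{k\in\NN}$ satisfies $F(x^k)-F(x^{k+1})\in K(x^k)\subset\mathcal{K}$, so it is $\mathcal{K}$-decreasing; Assumption~4.4 then yields $\hat z\in C$ with $F(\hat z)-F(x^k)\in -\mathcal{K}$ for every $k\in\NN$. With $\hat z$ fixed, my goal is a quasi-Fej\'er estimate $\|x^{k+1}-\hat z\|^2\le\|x^k-\hat z\|^2+\varepsilon_k$ with $\sum_k\varepsilon_k<\infty$. To produce one, I would combine the obtuse-angle characterization of the projection defining the $s$-compatible direction $v^k=P_{C-x^k}(-\beta_k J_F(x^k)^Tw^k)$ from Assumption~4.5, the $K$-convexity inclusion $J_F(x^k)(\hat z-x^k)\in F(\hat z)-F(x^k)-K(x^k)$, and the expansion of $\|x^{k+1}-\hat z\|^2$, in the spirit of Lemma~\ref{auxi1}.

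The error term produced this way involves $\gamma_k\beta_k|\langle w^k,J_F(x^k)v^k\rangle|$ plus a residual coming from $\langle w^k,F(\hat z)-F(x^k)\rangle$, and both would be controlled via a fixed scalarization vector. Since $\mathcal{K}$ is pointed and closed, ${\rm int}(\mathcal{K}^*)\ne\varnothing$, so I can pick $\bar w\in{\rm int}(\mathcal{K}^*)$ and obtain a uniform constant $c>0$ with $\langle\bar w,y\rangle\ge c\|y\|$ for every $y\in\mathcal{K}$. Pairing the Armijo inclusion $F(x^k)-F(x^{k+1})+\sigma\gamma_k J_F(x^k)v^k\in K(x^k)\subset\mathcal{K}$ with $\bar w$ gives
\[
\sigma\gamma_k\bigl|\langle\bar w,J_F(x^k)v^k\rangle\bigr|\le\langle\bar w,F(x^k)-F(x^{k+1})\rangle,
\]
whose right-hand side telescopes to a finite limit because $(\langle\bar w,F(x^k)\rangle)_{k\in\NN}$ is decreasing and bounded below by $\langle\bar w,F(\hat z)\rangle$. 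Combining $\|w^k\|\le 1$ with $-J_F(x^k)v^k\in\mathcal{K}$ then produces $|\langle w^k,J_F(x^k)v^k\rangle|\le c^{-1}|\langle\bar w,J_F(x^k)v^k\rangle|$, so the resulting estimate is genuinely quasi-Fej\'er in the sense of Definition~\ref{def-cuasi-fejer}.

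Boundedness of $(x^k)_{k\in\NN}$ then follows from Fact~\ref{cuasi-Fejer}(i). For any accumulation point $x^*$ all the hypotheses of Theorem~\ref{t1} are in force---$\cup_{k\in\NN}K(x^k)\subset\mathcal{K}$ is our hypothesis, while closedness and Lipschitz-type behavior of $G$, local Lipschitzness of $J_F$, and boundedness of $(\beta_k)_{k\in\NN}$ are ambient in the setting---so $x^*$ is stationary for~\eqref{(P)}, and since $F$ is $K$-convex Proposition~\ref{note1}(i) promotes stationarity to weak efficiency. I expect the main obstacle to be the quasi-Fej\'er step itself: Lemma~\ref{auxi1} as stated assumes $F(\hat z)-F(x^k)\in -K(x^k)$, whereas Assumption~4.4 provides only $-\mathcal{K}$, and it is exactly the conjunction ${\rm int}(\cap_{k\in\NN}K(x^k))\ne\varnothing$ and $K(x^k)\subset\mathcal{K}$ that makes the uniform scalarization $\bar w$ compatible with every $w^k\in K^*(x^k)$ and thereby bridges that gap.
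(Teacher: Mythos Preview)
Your overall strategy---quasi-Fej\'er convergence for boundedness, then Theorem~\ref{t1} for stationarity, then Proposition~\ref{note1}(i) for weak efficiency---is exactly the paper's. The proofs differ only in how the error term $2\beta_k\gamma_k|\langle w^k,J_F(x^k)v^k\rangle|$ is shown to be summable. The paper picks a full basis $\omega_1,\dots,\omega_m\in\mathcal{K}^*$ (available because $\mathcal{K}$ is pointed), writes $w^k=\sum_i\alpha_i^k\omega_i$ with uniformly bounded coefficients, pairs each $\omega_i$ with the Armijo inclusion and telescopes. Your single vector $\bar w\in{\rm int}(\mathcal{K}^*)$ with $\langle\bar w,y\rangle\ge c\|y\|$ on $\mathcal{K}$ is a cleaner substitute: since $-J_F(x^k)v^k\in K(x^k)\subset\mathcal{K}$ and $\|w^k\|\le1$, the bound $|\langle w^k,J_F(x^k)v^k\rangle|\le c^{-1}\langle\bar w,-J_F(x^k)v^k\rangle$ follows immediately, and the right-hand side telescopes via Armijo just as in the paper.

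Where your plan diverges is the reference point, and there your proposed ``bridge'' does not work. The paper does not use a $\hat z$ satisfying only $F(\hat z)-F(x^k)\in-\mathcal{K}$; it sets $T:=\{x\in C:F(x^k)-F(x)\in K(x^k)\text{ for all }k\}$, reads Assumption~4.4 as providing $\hat x\in T$, and then Lemma~\ref{auxi1} applies verbatim: since $w^k\in K^*(x^k)$ and $F(\hat x)-F(x^k)\in -K(x^k)$, the term $\langle w^k,F(\hat x)-F(x^k)\rangle$ is nonpositive and is simply dropped. Your attempt to control the residual for a merely $\mathcal{K}$-dominated $\hat z$ via $\bar w$ fails: the best you get is $2\gamma_k\beta_k\langle w^k,F(\hat z)-F(x^k)\rangle\le 2\hat\beta\,c^{-1}\langle\bar w,F(x^k)-F(\hat z)\rangle$, and the right-hand side converges to a (possibly positive) limit rather than forming a summable series. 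The hypothesis ${\rm int}(\cap_k K(x^k))\ne\varnothing$ does not convert the $\mathcal{K}$-bound into the $K(x^k)$-bound you need. So adopt the paper's reading of Assumption~4.4, take $\hat x\in T$, and apply Lemma~\ref{auxi1} directly---then there is no residual to bridge.
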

\begin{proof} Let us consider the set $T :=\{x\in C: F(x^k)- F(x)\in  K(x^k), \text{ for all }k\}$,
and take $\hat{x}\in T$, which exists  {as consequence of Assumption 4.4 together with \eqref{Armijo-type} and the fact that $\cup_{k\in\NN} K(x^k)\subset \mathcal{K}$}.  Since $F$ is a  $K$--convex function and {Assumption
4.5} holds, it follows from Lemma \ref{auxi1} that
\begin{equation}\|x^{k+1}- \hat{x}\|^2\leq \|
x^k- \hat{x}\|^2 + 2\beta_k\gamma_k
|\langle
w^k, J_F(x^k)v^k\rangle |,\end{equation}
for all $k\in\NN$.  By the definition of $v^k$, it is a   descent condition. This means that 
$-  J_F(x^k)v^k\in K(x^k)$. Hence $\langle
w^k, J_F(x^k)v^k\rangle\leq 0.$
Then,
\begin{equation}\label{22}\|x^{k+1}- \hat{x}\|^2-\|
x^k- \hat{x}\|^2 \leq  2\beta_k\gamma_k
|\langle
w^k, J_F(x^k)v^k\rangle |\leq   - 2\beta_k\gamma_k
\langle
w^k, J_F(x^k)v^k\rangle.\end{equation}
On the other hand as  $\mathcal{K}$ is a closed, convex and  pointed cone with nonempty interior,
$\mathcal{K}^*$ is also a closed, convex and pointed cone with nonempty interior. Since $K(x^k)\subset\mathcal{K}$, it holds that $\mathcal{K}^*\subset K^*(x^k)$. Hence $\mathcal{K}^*\subset \cap_{k\in\NN} K^*(x^k)$.  Let $\omega_1,\ldots,\omega_m\in \mathcal{K}^*$ be a {basis} of $\RR^m$ {which exists because ${\rm int}(K^*)\neq \emptyset$}.

Then, there  {exist} $\alpha^k_1,\ldots, \alpha^k_m\in \RR$ such that  
$w^k=\sum_{i=1}^m \alpha^k_i\omega_i$. Substituting in \eqref{22},
\begin{equation}\|x^{k+1}- \hat{x}\|^2-\|
x^k- \hat{x}\|^2 \leq     - 2\beta_k\gamma_k\sum_{i=1}^m\alpha_i^k
\langle\omega_i\\
, J_F(x^k)v^k\rangle.\end{equation}
On the other hand, since   $- J_F(x^k)v^k\in K(x^k)$, $\omega_1,\ldots,\omega_m\in \mathcal{K}^*\subset K^*(x^k)$ and $\beta_k, \gamma_k> 0$ for all $k\in\NN$, it holds 
$\langle \omega_i, -2\beta_k\gamma_k  J_F(x^k)v^k\rangle \geq 0$. {Without loss of generality, we can assume that $\|\omega_i\|=1$  {because the normalized non-null vectors are still a basis of $\RR^n$}. Then,}  $\alpha_i^k$ is uniformly bounded, i.e. there   {exists} $M>0$ such that for all $k,i$
$|\alpha_i^k|\leq M.$
Hence,  
\begin{equation}\label{222}\|x^{k+1}- \hat{x}\|^2-\|
x^k- \hat{x}\|^2 \leq     - 2M\beta_k\gamma_k\sum_{i=1}^m
\langle\omega_i\\
, J_F(x^k)v^k\rangle.\end{equation}
By the Armijo-type line-search in \eqref{Armijo-type},
$F(x^{k+1})-F(x^k)-\gamma_k\sigma  J_F(x^k)v^k\in -K(x^k).$
Recall that $\omega_i\in \cap_{k\in\NN} K^*(x^k)$, we obtain 
$\displaystyle\frac{\langle \omega_i,  F(x^k)-F(x^{k+1})\rangle}{\sigma} \geq \langle \omega_i, -\gamma_k J_F(x^k)v^k\rangle.$
It follows from  \eqref{222} that
\begin{equation}\label{2221}\|x^{k+1}- \hat{x}\|^2-\|
x^k- \hat{x}\|^2 \leq     2\frac{M}{\sigma}\beta_k\sum_{i=1}^m
\langle\omega_i,   F(x^k)-F(x^{k+1})\rangle.\end{equation}
For the Fej\'er convergence of $(x^k)_{k\in\NN}$ to $T$,  it is enough to prove that the term $\beta_k\sum_{i=1}^m
\langle\omega_i,  F(x^k)-F(x^{k+1})\rangle\ge 0$ is summable at all $k\in\NN$. 
Since $\beta_k\le \hat\beta$ for all $k\in\NN$,
\begin{equation}
\label{des-20}
 \sum_{k=0}^{n}\beta_k\sum_{i=1}^m
\langle\omega_i,   F(x^k)-F(x^{k+1})\rangle \le \hat \beta \sum_{i=1}^m
\langle\omega_i,   F(x^0)-F(x^{n+1})\rangle.\end{equation}
As consequence of the Armijo-type line-search, we have $F(x^k)-F(x^{k+1})\in K(x^k)\subset \mathcal{K}$. So, $(F(x^k))_{k\in\NN}$ is a decreasing sequence with respect to $\mathcal{K}$. Furthermore {by {\bf Assumption 4.4}}, it is bounded below, also with respect to the order given by $\mathcal{K}$, by $F(\hat{x})$, where  $\hat{x}\in T$.
Hence,  {Proposition 3.1, pages 90, 91 of \cite{Peressini} implies that} the sequence $(F(x^k))_{k\in\NN}$ converges and using \eqref{des-20} in the inequality below, we get \begin{align*}\sum_{k=0}^{\infty}\beta_k\sum_{i=1}^m
\langle\omega_i,   F(x^k)-F(x^{k+1})\rangle&=\lim_{n\to\infty}\sum_{k=0}^{n}\beta_k\sum_{i=1}^m
\langle\omega_i,   F(x^k)-F(x^{k+1})\rangle\\&\le\hat\beta\lim_{n\to\infty}\sum_{i=1}^m
\langle\omega_i,   F(x^0)-F(x^{n+1})\rangle\\&= \hat\beta \sum_{i=1}^m
\langle\omega_i,   F(x^0)-\lim_{n\to \infty} F(x^{n+1})\rangle\\&= \hat\beta\sum_{i=1}^m
\langle\omega_i,   F(x^0)-F(\hat x)\rangle<+\infty.\end{align*} So, the quasi-Fej\'er {convergence}  is fulfilled. 

Since $\hat{x}$ is an arbitrary element of $T$, it is clear  that $(x^k)_{k\in\NN}$ converges quasi-Fej\'er to $T$. Hence, by  Fact \ref{cuasi-Fejer}, it follows that $(x^k)_{k\in\NN}$  is bounded. Therefore, $(x^k)_{k\in\NN}$
has at least one accumulation point, which, by Theorem \ref{t1} is stationary. By Proposition \ref{note1}, this point is also weakly efficient {solution}, because $F$ is $K$--convex. Moreover,
since $C$ is closed and the whole sequence is feasible, then this accumulation point belongs to
$C$. \end{proof}

\section{Another Variable Order}\label{sect6}

As was noticed in Section $6$ of \cite{luis-gema-yunier-2014} the variable order structure can be formulated in two different ways. Moreover, Examples 3.1 and 3.2 in \cite{luis-gema-yunier-2014} illustrate the differences of considering one order or the other. 
Thus, the variable order for the optimization problem may also depend of new order by using the  cone valued mapping
	$\hat K\colon\RR^m \rightrightarrows \RR^m$ where $\hat K(y)$ is a convex, closed and pointed cone for all $y\in {C}\subset \RR^m$ {where $C$ is the feasible set}. It is worth noting that the domain of the new mapping $\hat K$ is in $\RR^m$ and the orderings considered in the  previous sections, are defined by applications whose domain is $\RR^n$. As already discussed in \cite{luis-gema-yunier-2014}, convexity can be defined and convex  functions satisfy   nice properties such as the existence of subgradients. 

{Given a closed and convex set $C$, we say that $x^*\in C$  solves the optimization problem \begin{equation}\label{p234}\hat K-\min F(x) \text{ s.t. } x\in C,\end{equation}  if, for all $x\in C$,}
$$F(x)-F(x^*)\notin {-}\hat K(F(x^*))\setminus \{0\}.$$Here we can assume that 
$\hat K\colon F(C)\subseteq \RR^m \rightrightarrows \RR^m$. We shall mention that the main difference between the above problem and \eqref{(P)} yields in the definition of the variable order given now by $\hat K$.
For a more detailed study of the properties of the minimal points and their characterizations and convexity concept on this case; see \cite{gabrielle-book, luis-gema-yunier-2014}.

In this framework the definitions of weak solution and  stationary point are analogous. The main difference is that instead of $K(x^*)$, the cone $\hat K(F(x^*))$ is considered to define the variable partial order.  That is, the point $x^*$ is stationary, iff  for all $d\in  C-x^*$, we have
$ J_F(x^*)d\notin -{\rm int}(\hat K(F(x^*))).$
Then, similarly as in the case of problem \eqref{(P)}, the following holds.
\begin{proposition} If $F$ is a continuously differentiable function and $C$ is a convex set, weak solutions of problem \eqref{p234} are stationary points. Moreover if $F$ is also convex with respect to $\hat K$, the converse is true.\end{proposition}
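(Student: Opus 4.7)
The plan is to mimic the proofs of Proposition \ref{primal} (for the direct implication) and Proposition \ref{note1}(i) (for the converse under convexity), simply replacing the cone $K(x^*)$ by $\hat K(F(x^*))$ throughout. The variable-order machinery is the same; only the point at which the cone is evaluated changes, and since the cone $\hat K(F(x^*))$ is fixed once $x^*$ is fixed, no new technical difficulty arises in either direction.

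For the direct implication, I would fix $d \in C - x^*$ and write $d = c - x^*$ for some $c \in C$. By convexity of $C$, $x^* + \alpha d \in C$ for every $\alpha \in [0,1]$. The assumption that $x^*$ is a weak solution yields $F(x^* + \alpha d) - F(x^*) \in (-{\rm int}(\hat K(F(x^*))))^c$ for all such $\alpha$. Taylor's expansion gives $F(x^*+\alpha d) - F(x^*) = \alpha J_F(x^*) d + o(\alpha)$; substituting and dividing by $\alpha > 0$ (using that the complement of an open cone is a closed cone), then letting $\alpha \downarrow 0$ and invoking closedness of $(-{\rm int}(\hat K(F(x^*))))^c$, I conclude $J_F(x^*) d \notin -{\rm int}(\hat K(F(x^*)))$. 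This is exactly stationarity.

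For the converse under $\hat K$-convexity, I would argue by contradiction. Suppose $x^*$ is stationary but fails to be a weak solution; then there exists $x \in C$ with $-k_1 := F(x) - F(x^*) \in -{\rm int}(\hat K(F(x^*)))$. The analog of the gradient inclusion inequality that follows from $\hat K$-convexity of $F$ (as indicated for the $K$-convex case in Section 2) furnishes some $k_2 \in \hat K(F(x^*))$ with
\begin{equation*}
F(x) - F(x^*) = J_F(x^*)(x - x^*) + k_2,
\end{equation*}
so that $J_F(x^*)(x - x^*) = -(k_1 + k_2)$. Since $\hat K(F(x^*))$ is a convex cone, $k_1 \in {\rm int}(\hat K(F(x^*)))$, and $k_2 \in \hat K(F(x^*))$, we have $k_1 + k_2 \in {\rm int}(\hat K(F(x^*)))$, hence $J_F(x^*)(x - x^*) \in -{\rm int}(\hat K(F(x^*)))$. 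Since $x - x^* \in C - x^*$, this contradicts the stationarity of $x^*$.

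The only point that requires care is the gradient inclusion inequality in this new setting: the $\hat K$-convexity notion is defined with cones evaluated on image values in $\RR^m$ rather than on domain points in $\RR^n$, but the argument from Section 2 transfers verbatim once one writes the difference quotient along $x^* + \lambda(x-x^*)$ and takes the limit as $\lambda \downarrow 0$, using closedness of the graph of $\hat K$. I expect this to be the main (and only mildly nontrivial) step, while the rest of the proof is essentially a verbatim rewriting of the earlier propositions.
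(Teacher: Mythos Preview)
Your proposal is correct and follows essentially the same approach as the paper: the paper's proof simply states that it ``follows the same lines of the proof of Propositions~\ref{primal} and~\ref{note1}: the Taylor expansion of $F$ and the closedness of $\hat K(F(x^*))$ imply the result,'' which is precisely what you have spelled out in detail. Your remark about verifying the gradient inclusion inequality in the $\hat K$-convex setting is a reasonable point of care, but it is not elaborated in the paper either; the argument transfers verbatim as you indicate.
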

\begin{proof} It follows the same lines of the proof of Propositions  \ref{primal} and \ref{note1}: the Taylor expansion of $F$ and the {closedness} of $\hat K(F(x^*))$ {imply} the result.  \end{proof}

The inexact algorithm is adapted in  the following way
 \begin{center}\fbox{\begin{minipage}[b]{\textwidth}
\noindent{{\bf F}-{\bf I}nexact {\bf P}rojected {\bf G}radient Method ({\bf FIPG Method}).} Given $0<\bar \beta\le \beta_k\le\hat{\beta}<+\infty$, $ \delta\in(0,1]$ and $\sigma, {\gamma} \in(0,1)$.  

\medskip

\noindent {\bf Initialization:}  Take $x^0\in \mathbb{R}^n$ and $\beta_0$.

\medskip

\noindent  {\bf Iterative step:} Given $x^k$ and $\beta_k$, compute $v^k$ a $\delta$-approximate solution of $(Q_{x^k})$.
 If $v^k=0$, then stop. Otherwise compute
\begin{equation}\label{Armijo1}
\ell(k):=\min\left\{\ell\in\NN \colon F(x^k)+\sigma \gamma^{\ell} J_F(x^k)v^k- F(x^k+\gamma^{\ell}v^k)\in \hat K(F(x^k))\right\}.
\end{equation}
\noindent  Set
$
x^{k+1}=x^k+\gamma_kv^k\in C,
$
with $\gamma_k=\gamma^{\ell(k)}$.
\end{minipage}}\end{center}
Here  the auxiliary problem
$(Q_{x^k})$ is defined as
 \begin{equation}\tag{$Q_{x^k}$}\label{iee}
 \min_{v\in  C-x^k}\left\{ \frac{\| v\|^2}{2}+\beta_k \phi(x^k,v)\right\},
\end{equation}  where 
 $\phi:\mathbb{R}^n\times\mathbb{R}^n\to \mathbb{R}$,
\begin{equation}\label{asconsecq}\phi(x,v):=\max_{y\in G(F(x))} y^T J_F(x)v,\end{equation} for $G:\mathbb{R}^m\rightrightarrows \mathbb{R}^m$  generator of ${\hat K}^*(F(x)):=\left[{\hat K}(F(x))\right]^*$.

With this  ordering{,} the function $\phi$ characterizes the stationarity. Furthermore, subproblem  $(Q_{x^k})$ has a unique solution  which is $v^k=0$ if and only if $x^k$ is a stationary point. 
Results analogous to those proven in Propositions \ref{prop2} and \ref{item(iv)} are also true. These facts implies that {\bf FIPG Method} is well defined, i.e., if it stops, then the computed point is a stationary point and in other case there exists $\ell(k)$ which satisfies the Armijo-type line-search \eqref{Armijo1}. So, only the convergence of a sequence generated by it must be studied. 

As in the last section, we analyze the convergence of the functional values sequence $(F(x^k))_{k\in\NN}$.

\begin{lemma}\label{lie}Suppose that $x^*$ is an accumulation point of $(x^k)_{k\in\NN}$ of the sequence generated by {\bf FIPG Method}.
If  $\cup_{x\in C} \hat K(F(x))\subseteq \mathcal{K}$, where
$\mathcal{K}$ is a closed, pointed and convex cone,   then $\lim_{k\to\infty}F(x^k)= F(x^*)$.
\end{lemma}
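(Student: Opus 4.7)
The plan is to mirror the argument of Lemma~\ref{le}, with the only substantive change being that the variable order comes from the mapping $y\mapsto \hat K(y)$ evaluated at $F(x^k)$ instead of directly from $K(x^k)$. Once the line-search inclusion \eqref{Armijo1} is combined with the descent property of $v^k$, the sequence $(F(x^k))_{k\in\NN}$ will be monotonically decreasing with respect to the fixed cone $\mathcal{K}$, and the result follows from the Daniell property of $\mathcal{K}$.

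First, let $(x^{i_k})_{k\in\NN}$ be a subsequence converging to $x^*$. From the Armijo-type rule \eqref{Armijo1},
\begin{equation*}
F(x^{k+1})-F(x^k)-\sigma\gamma_k J_F(x^k)v^k\in -\hat K(F(x^k)).
\end{equation*}
Since \textbf{FIPG Method} does not stop, $v^k\neq 0$ and so $\phi(x^k,v^k)<0$ for the function $\phi$ defined in \eqref{asconsecq}. By the obvious analogue of Proposition \ref{prop1}(i) applied to the cone $\hat K(F(x^k))$ with generators $G(F(x^k))$, this yields $J_F(x^k)v^k\in -{\rm int}(\hat K(F(x^k)))$. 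Multiplying by $\sigma\gamma_k>0$, summing with the inclusion above, and using the convexity of $\hat K(F(x^k))$, I would obtain
\begin{equation*}
F(x^{k+1})-F(x^k)\in -{\rm int}(\hat K(F(x^k)))\subseteq -{\rm int}(\mathcal{K}),
\end{equation*}
the last inclusion following from the hypothesis $\cup_{x\in C}\hat K(F(x))\subseteq \mathcal{K}$.

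Next I would exploit that $(F(x^k))_{k\in\NN}$ is thus decreasing with respect to $\mathcal{K}$, i.e., $F(x^k)-F(x^{k+1})\in \mathcal{K}$ for every $k\in\NN$. By continuity of $F$, $\lim_{k\to\infty}F(x^{i_k})=F(x^*)$. Since $\mathcal{K}$ is closed, telescoping the decreasing property along the subsequence gives $F(x^k)-F(x^{i_j})\in \mathcal{K}$ for every $i_j\ge k$, and letting $j\to\infty$ (using closedness of $\mathcal{K}$) yields $F(x^k)-F(x^*)\in \mathcal{K}$ for every $k\in\NN$. Hence the sequence $(F(x^k))_{k\in\NN}$ is decreasing and bounded below (in the $\mathcal{K}$-sense) by $F(x^*)$.

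Finally, since $\mathcal{K}$ is closed, pointed and convex, it has the Daniell property (see \cite{theluc}), so the whole sequence $(F(x^k))_{k\in\NN}$ converges to $\inf_k\{F(x^k)\}$. As the subsequence $(F(x^{i_k}))_{k\in\NN}$ converges to $F(x^*)$ and $F(x^*)\preceq_{\mathcal{K}}F(x^k)$ for every $k\in\NN$, pointedness of $\mathcal{K}$ forces $\inf_k\{F(x^k)\}=F(x^*)$, establishing $\lim_{k\to\infty}F(x^k)=F(x^*)$. The only delicate point is confirming that Proposition \ref{prop1}(i) transfers verbatim to the cone $\hat K(F(x^k))$ with its generators $G(F(x^k))$; this is immediate since the argument there only uses that $G(x)$ generates $K^*(x)$, and here $G(F(x))$ plays the same role for $\hat K^*(F(x))$. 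The Lipschitz/closedness hypotheses on $G(F(\cdot))$ are not needed for this particular lemma and will instead be used in the subsequent stationarity analysis.
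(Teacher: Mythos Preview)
Your proof is correct and follows essentially the same route as the paper: the paper's own proof of Lemma~\ref{lie} is the one-line remark ``The result is again proven by the existence of a non-increasing sequence with an accumulation point,'' i.e., it simply invokes the argument of Lemma~\ref{le} with $\hat K(F(x^k))$ in place of $K(x^k)$, which is precisely what you have written out in detail. Your observation that the closedness and Lipschitz hypotheses on $G\circ F$ are not actually used here (they enter only in the subsequent stationarity analysis, as in Theorem~\ref{t1}) is also accurate and matches the situation for Lemma~\ref{le}.
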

\begin{proof} The result is again proven  by the existence of a non-increasing sequence with an accumulation point. \end{proof}

Next, with the help of the last Lemma, {we summarize the convergence} of the generated sequence  with the following result.

\begin{theorem}\label{t11}Suppose that
\item[ {\bf(a)}] $\cup_{x\in C} \hat K(F(x))\subset \mathcal{K}$, where
$\mathcal{K}$ is a a closed, pointed and convex cone. \item[ {\bf(b)}] {The map $G\circ F$ is  closed}. \item[ {\bf(c)}] $\mbox{d}_H(G(F(x)),G(F(\hat{x})))\leq L_{GF}\|x-\hat{x}\|$, for all $x,\hat{x}\in C$.
\item[ {\bf(d)}] $J_F$ is a locally Lipschitz function  {around $x$ for all $x\in C$}.

\item [] {If $(\beta_k)_{k\in\NN}$ is a bounded sequence}, then all accumulation points of $(x^k)_{k\in\NN}$  generated by {\bf FIPG Method} are stationary points of problem \eqref{p234}.
\end{theorem}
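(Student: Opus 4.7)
The plan is to mirror the proof of Theorem \ref{t1} almost verbatim, replacing the cone $K(x)$ by $\hat K(F(x))$ and the generator set $G(x)$ by $G(F(x))$ throughout. The hypotheses have been chosen precisely so that this substitution preserves every analytic ingredient: assumption (c) gives the Lipschitz continuity of the composition $G\circ F$ on $C$, while (a), (b), (d) are direct translations of the corresponding hypotheses in Theorem \ref{t1}. Consequently, the analogs of Proposition \ref{prop2}, Lemma \ref{remark1}, Proposition \ref{item(iv)}, and Lemma \ref{opu} go through with the same proofs, since each of them relied only on structural properties (compactness, closedness, sublinearity, Lipschitz estimates) of $G$ and of the support function $\rho(x,w)=\max_{y\in G(x)} y^T w$, which are now inherited by $y\mapsto G(y)$ evaluated at $F(x)$.

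The argument then proceeds as follows. Let $x^*$ be an accumulation point of $(x^k)_{k\in\NN}$ and extract a subsequence $(x^{i_k})_{k\in\NN}$ converging to $x^*$. By the analog of Lemma \ref{opu}, $\|v^{i_k}\|\le 2\beta_{i_k}\|J_F(x^{i_k})\|$, so $(v^{i_k})_{k\in\NN}$ is bounded because $(\beta_k)_{k\in\NN}$ is bounded and $J_F$ is continuous. Passing to further subsequences (and relabeling), one may assume $v^{i_k}\to v^*$, $\beta_{i_k}\to\beta_*\in[\bar\beta,\hat\beta]$, and $\gamma_{i_k}\to\gamma^*\in[0,1]$. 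Lemma \ref{lie} provides $\lim_{k\to\infty}F(x^k)=F(x^*)$. Reproducing the chain of inequalities based on sublinearity of $\rho$ and the Armijo-type rule \eqref{Armijo1}, one obtains
\begin{equation*}
\rho\!\left(F(x^{i_k}),F(x^{i_k})-F(x^{i_k+1})\right)\ge -\sigma\gamma_{i_k}\,\rho\!\left(F(x^{i_k}),J_F(x^{i_k})v^{i_k}\right)\ge 0,
\end{equation*}
and the continuity of $\rho$ together with $F(x^{i_k})\to F(x^*)$ forces the left-hand side to tend to zero, hence $\gamma_{i_k}\,\rho(F(x^{i_k}),J_F(x^{i_k})v^{i_k})\to 0$.

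Split into the two familiar cases. If $\gamma^*>0$, then $\phi(x^{i_k},v^{i_k})\to 0$; combining the $\delta$-approximate inequality defining $v^{i_k}$ with the continuity of $\phi$ in the first argument (obtained from the $G\circ F$ analog of Lemma \ref{remark1}) and evaluating at the feasible test direction $\hat{x}-x^{i_k}$ where $v(x^*)=\hat x-x^*$, any strict inequality $\theta_{\beta_*}(x^*)<-\epsilon<0$ yields the same contradiction as in Case~1 of Theorem \ref{t1}; hence $\theta_{\beta_*}(x^*)=0$ and Proposition \ref{prop1}(ii) (restated with $\hat K(F(x^*))$) gives stationarity. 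If $\gamma^*=0$, the minimality of $\ell(k)$ in \eqref{Armijo1} implies that $F(x^{i_k}+\gamma^{\ell(i_k)-1}v^{i_k})-F(x^{i_k})-\sigma\gamma^{\ell(i_k)-1}J_F(x^{i_k})v^{i_k}\notin -\hat K(F(x^{i_k}))$, which by the characterization via $\rho$ gives a nonnegative $\rho$-value; dividing by $\gamma^{\ell(i_k)-1}$, passing to the limit (using continuity of $\rho$ and positive homogeneity in the second argument), and then using $\sigma<1$ together with the already-established $\rho(F(x^*),J_F(x^*)v^*)\le 0$ yields $\rho(F(x^*),J_F(x^*)v^*)=0$, i.e., $x^*\in S^s$.

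The main obstacle is not conceptual but bookkeeping: one must verify that each preparatory result (Proposition \ref{prop2}, Lemma \ref{remark1}, Proposition \ref{item(iv)}, Lemma \ref{opu}) has a genuine analog once $K(x)$ is replaced by $\hat K(F(x))$. The essential observation that makes this uniform is that $\phi(x,v)=\rho(F(x),J_F(x)v)$, so Lipschitz continuity of $\phi$ in $x$ on bounded subsets of $C$ follows from combining assumption (c) (Lipschitz continuity of $G\circ F$) with assumption (d) (local Lipschitz continuity of $J_F$) and the continuity of $F$, exactly as in the bound \eqref{lipsro}--\eqref{eq33}. Once this Lipschitz estimate is in hand, the rest of the proof is a verbatim transcription of the proof of Theorem \ref{t1}.
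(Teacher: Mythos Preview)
Your proposal is correct and follows exactly the approach of the paper, whose proof of Theorem \ref{t11} is simply ``It follows from the same lines of the proof of Theorem \ref{t1}.'' You have in fact supplied more detail than the paper does, carefully identifying how each hypothesis (a)--(d) and each preparatory result (Proposition \ref{prop2}, Lemma \ref{remark1}, Proposition \ref{item(iv)}, Lemma \ref{opu}, and Lemma \ref{lie}) transfers under the substitution $K(x)\mapsto \hat K(F(x))$, $G(x)\mapsto G(F(x))$; the only minor deviation is that in Case~2 you pass to the limit along the failed steps $\gamma^{\ell(i_k)-1}$ directly, whereas the paper's Theorem \ref{t1} first fixes $q$ and lets $k\to\infty$, then lets $q\to\infty$ --- both routes are standard and lead to the same conclusion.
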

\begin{proof} It follows from the same lines of the proof of Theorem \ref{t1}.  \end{proof}

We want to point out that in the last theorem the condition $$\mbox{d}_H(G(F(x)),G(F(\hat x)))\leq L_{GF}\|x-\hat{x}\|, \quad \forall x,\hat x\in C$$ {replaces the similar one given in Theorem \ref{t1}, i.e., 
$\mbox{d}_H(G(y),G(\hat{y}))\leq L_{G}\|y-\hat{y}\|,$  for all $y,\hat y\in C$. 
Moreover, if $F$ is Lipschitz on $C$, then this last condition implies the condition (c) in Theorem \ref{t11} by taking $y=F(x)$ and $\hat y=F(\hat x)$. }

{Next result is an extension to the variable order setting of Theorem 5.2 of \cite{[19]}}.
\begin{theorem} Assume that $F$ is $\hat K$--convex and additionally: \item[ {\bf (a)}] If $(z^k)_{k\in\NN}\subset F(C)$ is a sequence  such that   $z^k-  z^{k+1}\in \hat K(F(x^k))$ for all $k\in\NN$
and $z\in C$, $z^k-z\in \mathcal{K}$ for some closed, convex and pointed cone $\mathcal{K}$, $\cup_{k\in\NN} \hat K(F(x^k))\subseteq \mathcal{K}$, then there exists $\hat{x}\in C$ such that $F(\hat{x})\preceq z^k$ for all $k\in\NN$.
\item[ {\bf (b)}] The search direction $v^k$ is s-compatible at $x^k$, i.e., $v^k =P_{C-x^k} (-\beta J_F(x^k)^Tw^k)$, where $w^k\in {{\rm conv}(G(x^k))}$, for all $k\in\NN$. \item[ {\bf (c)}]${\rm int}(\cap_{k\in\NN} \hat K(F(x^k))) \neq \varnothing$.\item[ {\bf (d)}]There exists $\mathcal{K}$, a pointed, closed and convex cone such that $ \hat K(F(x^k))\subseteq \mathcal{K}$ for all $k\in\NN$.
  
\item [] Then every sequence generated by {\bf FIPG Method} is bounded and its accumulation points are 
 weakly efficient solutions.\end{theorem}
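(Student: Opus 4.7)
The plan is to transport the argument of Theorem \ref{teo4.3g} almost verbatim, with every occurrence of the cone $K(x^k)$ replaced by $\hat K(F(x^k))$, and with Theorem \ref{t1} replaced by its $\hat K$-counterpart Theorem \ref{t11}. The basic structure is: (i) define a ``target'' set $T\subset C$ using assumption (a); (ii) use $\hat K$--convexity together with $s$--compatibility to obtain a Fej\'er-type inequality in the spirit of Lemma \ref{auxi1}; (iii) dualize against a basis of $\mathcal{K}^*$ to turn the residual term into a telescoping functional increment; and (iv) invoke quasi-Fej\'er machinery (Fact \ref{cuasi-Fejer}) together with Theorem \ref{t11} to conclude.

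More concretely, I would first set
\[
T:=\menge{x\in C}{F(x^k)-F(x)\in \hat K(F(x^k))\text{ for all }k\in\NN}.
\]
By the feasibility property of {\bf FIPG Method} (the Armijo rule gives $F(x^k)-F(x^{k+1})\in \hat K(F(x^k))\subseteq \mathcal{K}$ for every $k$) the sequence $(F(x^k))_{k\in\NN}$ is decreasing with respect to $\mathcal{K}$; by assumption (a) it therefore admits a lower bound $F(\hat x)$ with $\hat x\in C$, and such an $\hat x$ lies in $T$. I then repeat the calculation of Lemma \ref{auxi1}: using $v^k=P_{C-x^k}(-\beta_k J_F(x^k)^\top w^k)$, the obtuse-angle property of the projection, the inclusion $F(\hat x)-F(x^k)\in -\hat K(F(x^k))$, $\hat K$--convexity of $F$, and $w^k\in{\rm conv}(G(F(x^k)))\subset \hat K^*(F(x^k))$, one obtains
\[
\|x^{k+1}-\hat x\|^2\le \|x^k-\hat x\|^2+2\beta_k\gamma_k|\langle w^k,J_F(x^k)v^k\rangle|.
\]
Since $v^k$ is an inexact descent direction, Proposition \ref{prop1}(i) applied to $\hat K(F(x^k))$ gives $-J_F(x^k)v^k\in \hat K(F(x^k))$, so $\langle w^k,J_F(x^k)v^k\rangle\le 0$ and the absolute value can be dropped, leaving $-2\beta_k\gamma_k\langle w^k,J_F(x^k)v^k\rangle$ on the right.

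The next step is the dualization trick from the proof of Theorem \ref{teo4.3g}. From (c)--(d), $\mathcal{K}^*\subseteq \bigcap_{k\in\NN}\hat K^*(F(x^k))$ is a closed, pointed, convex cone with nonempty interior, so I can select a basis $\omega_1,\ldots,\omega_m\in\mathcal{K}^*$ of $\RR^m$ with $\|\omega_i\|=1$. Expanding $w^k=\sum_i\alpha_i^k\omega_i$ gives uniform boundedness $|\alpha_i^k|\le M$ because each summand $-\langle\omega_i,\gamma_k\beta_k J_F(x^k)v^k\rangle$ is nonnegative and their total is bounded (by the above inequality). Combining this with the Armijo rule \eqref{Armijo1} tested against each $\omega_i\in \hat K^*(F(x^k))$ yields
\[
\langle\omega_i,-\gamma_k J_F(x^k)v^k\rangle\le \tfrac{1}{\sigma}\langle\omega_i,F(x^k)-F(x^{k+1})\rangle,
\]
so that
\[
\|x^{k+1}-\hat x\|^2-\|x^k-\hat x\|^2\le \tfrac{2M}{\sigma}\beta_k\sum_{i=1}^m\langle\omega_i,F(x^k)-F(x^{k+1})\rangle.
\]
The right-hand side is nonnegative and telescopes; using $\beta_k\le\hat\beta$ and the lower bound $F(\hat x)\preceq_{\mathcal{K}} F(x^k)$ from assumption (a), the resulting series is summable, giving the quasi-Fej\'er property of $(x^k)_{k\in\NN}$ with respect to $T$ in the sense of Definition \ref{def-cuasi-fejer}.

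Fact \ref{cuasi-Fejer}(i) then yields boundedness of $(x^k)_{k\in\NN}$, so accumulation points exist and lie in $C$ (since $C$ is closed and the iterates are feasible). Theorem \ref{t11} identifies every such accumulation point as a stationary point of \eqref{p234}, and the $\hat K$--convex analogue of Proposition \ref{note1}(i) (stated just before the algorithm) upgrades stationarity to weak efficiency. The main technical obstacle, as in Theorem \ref{teo4.3g}, is not any single step but the careful bookkeeping that the algebraic manipulations of the projection inequality still go through when the cone depends on the image point $F(x^k)$ rather than on $x^k$; once one checks that $w^k\in \hat K^*(F(x^k))$, that the Armijo test involves the same cone $\hat K(F(x^k))$, and that $F(\hat x)-F(x^k)\in -\hat K(F(x^k))$ follows from the definition of $T$, the reductions used in Lemma \ref{auxi1} and in the dualization by $\omega_i$ go through verbatim.
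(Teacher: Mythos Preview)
Your proposal is correct and follows essentially the same approach as the paper, which in fact gives only a one-line proof (``It follows from the same lines of the proof of Theorem \ref{teo4.3g} using now the new variable order structure''). Your write-up supplies precisely the bookkeeping the paper omits; the only minor slip is your stated reason for the uniform bound $|\alpha_i^k|\le M$, which actually comes from $\|w^k\|\le 1$ (since $w^k\in\conv(G(F(x^k)))\subset\mathbb{B}(0,1)$) together with the fixed basis $\{\omega_i\}$, not from the nonnegativity argument you sketch.
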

\begin{proof} It follows from the same lines of the proof of Theorem \ref{teo4.3g} using now the new variable order structure. \end{proof}

\section{Illustrative Examples}

In this section we present three examples, two for problem \eqref{(P)} and one for problem \eqref{p234}, illustrating how both proposed methods {work} {starting at ten different random initial points}. We verify our assumptions in each problem and make some comparisons between the proposed methods by changing the inexactness of the approximate gradient direction.

The algorithms were implemented in {\sc MatLab} R2012 and ran on an Intel(R) Atom(TM) CPU N270 at 1.6GHz. All starting points are not solutions and {are} randomly generated.  {The stopping criteria were $\|v^k\|<10^{-4}$ and also when a maximum of 30-iterations is reached. The solutions were displayed with four digits, CPU time was recorded in seconds and the number of iterations was also displayed in each case.}  {Our numerical test were performed with $\beta_k$ constant  equal $1$. }

{Despite the fact that it may not be an easy task to compute the positive dual cone of a given cone}, the computation of (approximate) directions is, in general, complicated. Indeed, after the definition, the exact optimal value of problem $(P_x)$ must be known. The use of $s$-compatible directions at iteration $k$ of the proposed methods, see Definition \ref{scompatible}, is recommended in the case in which the exact projection onto the set $C-x^k$ is not too complicated.
This is the case for the set of feasible solutions in the next example. Clearly, in all examples below, {the defining order cone-valued mappings are closed} maps and  {their generators are} Lipschitz with respect to the Hausdorff distance.
\begin{example}
We consider the vector problem as \eqref{(P)} with  $$K - \min\; F(x)=\left(x+1,x^2+1\right),\text{ s.t. } \; x\in [0,1],$$ where $F:\RR\rightarrow \RR^2$ and the variable order is given by  $K:\RR\rightrightarrows \RR^2$, $$K(x):=\left\{(z_1,z_2)\in \RR^2: z_1\geq 0,\;(x^2+1)z_1-(x +1)z_2\leq 0\right\}.$$
In this model 
the closed interval $[0, \sqrt{2}-1]\approx[0,\,0.4142]$ is the set of minimizers. 

{The {\bf IPG Method} was run ten times by using ten random initial points, outside of the set of minimizers, and each time it ended at solution points,} which  {have been} obtained after the verification of the stopping criterion. 

\noindent The method gives the following data:
{\small
$$\begin{array}{|l|c|c|c|c|c|c|c|c|c|c|}\hline {\bf {Instances}}& {\bf 1}&{\bf 2}&{\bf 3}&{\bf 4}&{\bf 5}&{\bf 6}&{\bf 7}&{\bf 8}&{\bf 9}&{\bf 10}\\\hline\hline  {\bf Initial\, Points} & 0.6557 &   0.6948&    0.8491  &  0.9340  &  0.6787&    0.7577 &   0.7431 &   0.4387  &  0.6555    &0.9502
\\\hline {\bf Solutions}&  0.4115 \,&   0.4128  \,&  0.4140  \,& 0.4135 \,&    0.4116 \,&   0.4131   \,& 0.4127 \,&   0.4136 \,&    0.4114\,&    0.4130 \,
\\\hline
 {\bf CPU \; Time} &0.0001& 0.0250&0.0001&  0.0156    &    0.0001 &        0.0001& 0.0001 &0.1094& 0.0156& 0.0781 \\\hline {\bf N\frac{o}{}\, Iterations}&16 &   19 &    23&    26&    17 &   20&    20&     4 &   16&    28
\\\hline\end{array}$$}\\
Note that in all cases  above optimal solutions were obtained. {Moreover,  to illustrate how the inexactness of the {\bf IPG Method} affects the performance (average of CPU time and the average of the number of iterations for $10$ instances) of the convergence is considering different values of the approximate solution parameter $\delta$.} \\
{
{\small
$$\begin{array}{|l|c|c|c|c|}\hline {\delta-{\bf Approximation \; Parameters}}\;& {\delta=0}&{\delta=0.25}&{\delta=0.5}&{\delta=0.75}\\\hline\hline  {\bf Average \; of \; CPU \; Time} &\; 0.4819&\; 0.2232&\;0.0244&\; 0.0251\\\hline {\bf Average \; of\;  N\frac{o}{}\, Iterations}&9 &   15 &    19 & 21
\\\hline\end{array}$$} \\
The above table shows that when the inexactness increases,  {the number of iterations increases. However, the CPU time decreases from $\delta=0$ to $\delta=0.5$, however in the last value of $\delta$ the CPU time slightly increases.} }
\end{example}
The next example is a non-convex problem corresponding to the model studied in the previous section.
\begin{example}{[\rm cf. Example $4.13$ of \cite{ap11}}] Consider the following vector problem as problem \eqref{p234} 
$$\hat{K} - \min\; F(x_1,x_2)=\left( x_1^2, x_2^2\right),\text{ s.t. }\pi\leq x_1^2+x_2^2\leq 2\pi,$$
where $F\colon \RR^2\rightarrow \RR^2$ and the variable order is given by the cone $\hat{K}\colon\RR^2\rightrightarrows \RR^2$, $$\hat{K}(y):=\left\{z=(z_1,z_2)\in \RR^2: \|z\|_2\leq\left[ \left(\begin{matrix} 2 &1\\-1& -1\end{matrix}\right)y \right]^T z/\pi\right\}.$$
The set of solutions (stationary points) of this problem is directly computed by using the definition of $S^{s}$ given in \eqref{stationary-inclusion} as $$\left\{(x_1,x_2)\in \RR^2: x_1^2+x_2^2=\pi\quad  \mbox{or}\quad  x_1^2+x_2^2=2\pi \right\} .$$
The {\bf FIPG Method} computes the following points: 
{\small
$$\begin{array}{|c|c|c|c|c|c|c|c|c|c|c|}\hline{\bf {Instances}}&{\bf 1}&{\bf 2}&{\bf 3}&{\bf 4}&{\bf 5}&{\bf 6}&{\bf 7}&{\bf 8}&{\bf 9}&{\bf 10}\\\hline\hline  {\bf Initial\, Points}& 1.8650   & 1.7525   & 2.4190&    1.9573&    0.7931&    1.2683&    1.8135& -2.0485  &-0.6446&   -0.8561\\
 &   1.6400  &  1.6350&    0.0835&    0.2813&   -2.0321&   -1.6814&    0.3050
&0.3229    &1.9606&    2.1011\\\hline
{\bf Solutions}&\! \!
  1.1632 \!\! &\! \!  0.9850 \!\! &\! \!   1.7705 \!\!&\! \!   1.7492\!\!&\!  \!  0.8634 \!\! &\! \! 1.4208 \!\! &\!  \! 1.7456\!\!&\!  \! -1.7438\!\!&\! \!   -0.8016\!\!&\!    \!-0.9535\! \!\\
 &   2.2204&    2.3050&    0.0841 &   0.2859&   -2.3532  & -2.0650&    0.3074& 0.3172&    2.3750&    2.3182
\\\hline
{\bf CPU\, Time}& 0.2969  &  0.2344  &  0.2188   & 0.1719   & 0.1563 &   0.5625 &   0.2656& 0.1875   & 0.2031&    0.3125 \\\hline\!\!{\bf  N\frac{o}{}\, Iterations}\!\!&2   &  4&     2&     2&     4&    17 &    3&     2&     2&     5\\\hline\end{array}$$}\\
Note that the solutions computed at all iterations of the proposed method belong to the set of optimal solutions.\end{example}
{ The last example from Section 9.2 of the book \cite{gabrielle-book} is widely studied.
\begin{example}{[\rm cf. Example $9.5$ of \cite{gabrielle-book}}] Consider the following problem
$$K - \min\; F(x)=(x_1,x_2), \,\text{ s.t. }\; x\in C$$ where 
$$C:=\left\{x=(x_1,x_2)\in [0,\pi]\times [0,\pi] \,\left| \begin{array}{ll} x_1^2+x_2^2-1-\displaystyle\frac{1}{10}\cos\left(16 \arctan\left(\displaystyle\frac{x_1}{x_2}\right)\right)\geq 0,\\[2.5ex]  (x_1-0.5)^2+(x_2-0.5)^2\leq 0.5.\end{array}\right.\right\}.$$
Consider the map $K:\RR^2\rightrightarrows \RR^2$, given by $$K(x):=\left\{z\in \RR^2: \|z\|_2\le \frac{2}{\min_{i=1,2}x_i}x^Tz\right\}.$$
The {\bf IPG Method} stating at ten random initial points performs as follows: 
{\small
$$\begin{array}{|c|c|c|c|c|c|c|c|c|c|c|}\hline{\bf {Instances}}&{\bf 1}&{\bf 2}&{\bf 3}&{\bf 4}&{\bf 5}&{\bf 6}&{\bf 7}&{\bf 8}&{\bf 9}&{\bf 10}\\\hline\hline  {\bf Initial\, Point}\; 
 & 0.9735&    0.7932&    0.8403&    0.9847  &  0.7508&    0.9786   & 0.9790&   0.9679  &  0.8082   & 0.8965\\
   & 0.6608 &   0.9050 &   0.8664 &   0.6228&    0.9326&    0.6448&    0.6433&    0.6762 &   0.8937 &   0.8046
\\\hline
{\bf Solution}\;\; &
 0.9011 &   0.7407 &   0.7854&    0.9096&    0.7004&    0.9050&    0.9054&   0.8967  &  0.7551 &   0.7754\\&
    0.5589\,&    0.7916\,&    0.7541\,&    0.5228\,&    0.8182\,&    0.5437\,&    0.5423\,&    0.5735 \,&   0.7806 \,&   0.5859\,\\\hline
{\bf CPU\; Time}&  0.0938  &  0.0313 &   0.0625 &   0.0313 &   0.0469 &   0.0156&    0.0313&    0.0313 &   0.0313&    0.0156\\\hline\end{array}$$}\\
{In this case, the maximum number of iterations is achieved.} Nevertheless, good approximations to minimal elements have been computed at each of the ten instances presented in the above table. \end{example}

For all the above examples, we compare the exact and inexact versions by taking the same initial points ($10$ in total on the "x" axes) in term of the CPU time (in seconds on the "y" axes); see figures below.
\begin{figure}[H]
\begin{subfigure}[t]{0.3705\textwidth}
\centering
\includegraphics[width=\textwidth]{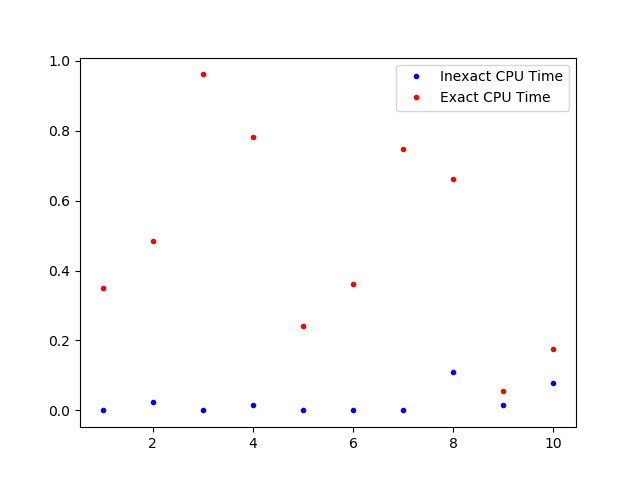}
	 \caption*{\label{fig:pp-twosubspaces1}Example 6.1}
\end{subfigure}\!\!\!\!\!\!\!\!\!\!
\begin{subfigure}[t]{0.3705\textwidth}
\centering
\includegraphics[width=\textwidth]{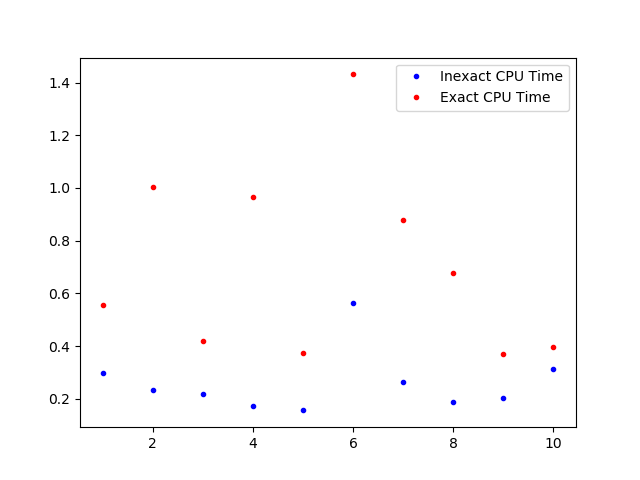}
	 \caption*{\label{fig:pp-twosubspaces2} Example 6.2}	
    \end{subfigure}\!\!\!\!\!\!\!\!\!\!
    \begin{subfigure}[t]{0.3705\textwidth}
    \centering
    \includegraphics[width=\textwidth]{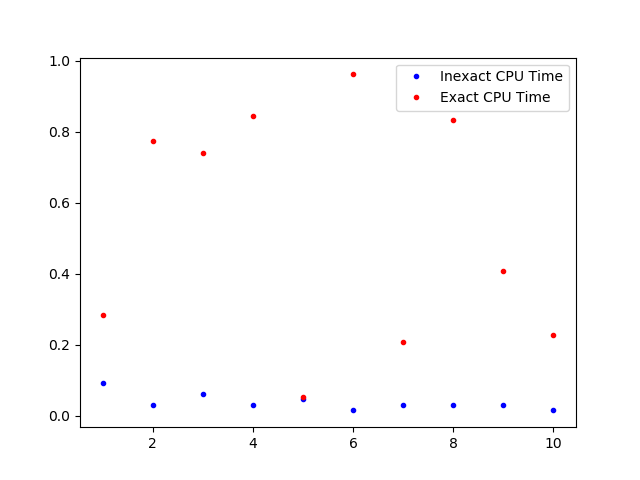}
    	 \caption*{\label{fig:pp-twosubspaces3} Example 6.3}
    \end{subfigure}\\
    
This shows that the inexact versions are significantly faster (CPU Time), in almost all instances, than the exact ones. 
However,  for all initial points of the above examples, the exact versions of the proposed methods take fewer iterations than the inexact ones to converge to a solution. It is worth emphasizing that the exact versions have to solve exactly the harder subproblems $P_{x^k}$ and $Q_{x^k}$ to find the descent direction at each iteration $k$. This is a serious drawback for the computational implementation point of view (avoidable for the above examples), making the exact implementation inefficient in general.        
\end{figure}
\section{Final Remarks}The projected gradient method is one of the classical and basic schemes for solving constrained optimization problems. In this paper, we have extended the exact and unconstrained scheme proposed in \cite{[19]}. The proposed scheme now solves smooth and constrained vector optimization problems under a variable ordering by taking inexact descent directions. This inexact projected approach promotes future research on other efficient variants for these kinds of problems. 
As it is shown in the examples above it is more effective and implementable than the exact one. Moreover, constrained variable optimization problems can now be solved by using the proposed methods.  
However, the full convergence of the generated sequence to a weakly efficient solution is still an open problem in variable order settings. 

 Another important solution concept is non-domination, which is used for optimizing set-valued maps under the variable structure. A variant of the proposed approach can also be considered using the condition $J_F(x^k)v\in -\cup_{x\in C}K(x)$ as the descent direction at each step $k$. However,
this approach will converge to a stationary point as in the case we have proposed. Non-dominated
points are also minimizers under well-known conditions. A future research direction would be to find, in this
case, how we can guarantee that the algorithm converges to non-dominated points.  Set-valued optimization is also an interesting topic and finding algorithms that compute points that at least fulfill conditions, such as those presented in Theorem $3.3$ in \cite{durea}, is important.  

Future work will be also addressed to investigate for some particular instances of this problem the case in which the objective function is a non-smooth function, extending the projected subgradient method proposed in \cite{yunier-2013} to the variable order setting. 
The numerical behavior of these approaches under $K$--convexity of the non-smooth objective function remains open and it is a promising direction to be investigated. Despite its computational shortcomings, it hopefully sets the foundations of future research into more efficient and general algorithms for this setting.

\medskip

\noindent {\large \bf Acknowledgements:} JYBC was partially supported by the National Science Foundation (NSF) Grant DMS - 1816449 and by Research \& Artistry grant from NIU.
The results of this paper were developed during the stay of the second author at the  University of Halle-Wittenberg supported by the Humboldt Foundation. 

%
\end{document}